\numberwithin{equation}{section}
\newtheorem{theorem}{Theorem}[section]
\newtheorem{proposition}[theorem]{Proposition}
\newtheorem{lemma}[theorem]{Lemma}
\newtheorem{definition}[theorem]{Definition}
\theoremstyle{definition}
\newtheorem{remark}[theorem]{Remark}
\renewcommand{\epsilon}{\eps}
\newcommand{\E}{{\mathcal E}}
\renewcommand{\H}{{\mathscr H}}
\renewcommand{\L}{{\mathcal L}}
\newcommand{\R}{{\mathbb R}}
\newcommand{\eps}{\varepsilon}
\newcommand{\M}{\mathscr{M}}
\newcommand{\pnorm}[2][]{\if #1'' \left|#2\right|_p \else \left|#2\right|_{#1} \fi}
\renewcommand{\theta}{\vartheta}
\newcommand{\Sf}{{\mathbb S^{n-1}}}
\newcommand{\Rn}{{\mathbb R^{n}}}
\newcommand{\eqlab}[1]{\begin{equation}  \begin{aligned}#1 \end{aligned}\end{equation}} 
\newcommand{\bgs}[1]{\begin{equation*} \begin{aligned}#1\end{aligned}\end{equation*}} 
 \newcommand{\syslab}[2] []  {\begin{equation}#1  \left\{\begin{aligned}#2\end{aligned}\right.\end{equation}} 
  \newcommand{\sys}[2][]{\begin{equation*}#1  \left\{\begin{aligned}#2\end{aligned}\right.\end{equation*}}
\def\Xint#1{\mathchoice
{\XXint\displaystyle\textstyle{#1}}%
{\XXint\textstyle\scriptstyle{#1}}%
{\XXint\scriptstyle\scriptscriptstyle{#1}}%
{\XXint\scriptscriptstyle\scriptscriptstyle{#1}}%
\!\int}
\def\XXint#1#2#3{{\setbox0=\hbox{$#1{#2#3}{\int}$ }
\vcenter{\hbox{$#2#3$ }}\kern-.6\wd0}}
\def\dashint{\Xint-}
\title[Asymptotic mean value properties for anisotropic operators]{Asymptotic mean value properties \\ for fractional anisotropic operators}
\author[C. Bucur]{Claudia Bucur}
\author[M.\ Squassina]{Marco Squassina}
\address[C. Bucur]{School of Mathematics and Statistics \newline\indent
	University of Melbourne \newline\indent
	813 Swanston Street, Parkville VIC 3010, Australia}
\email{c.bucur@unimelb.edu.au}
\address[M.\ Squassina]{Dipartimento di Matematica e Fisica \newline\indent
	Universit\`a Cattolica del Sacro Cuore \newline\indent
	Via dei Musei 41, I-25121 Brescia, Italy}
\email{marco.squassina@unicatt.it}
\thanks{The second author is member
	of {\em Gruppo Nazionale per l'Analisi Ma\-te\-ma\-ti\-ca, la Probabilit\`a e le loro Applicazioni} (GNAMPA) 
	of the {\em Istituto Nazionale di Alta Matematica} (INdAM)}
\subjclass[2010]{46E35, 28D20, 82B10, 49A50}
\keywords{Mean value formulas, anisotropic fractional operators}
\begin{document}

\begin{abstract}
	We obtain an asymptotic representation formula for harmonic functions
	with respect to a linear anisotropic nonlocal operator. Furthermore we get a Bourgain-Brezis-Mironescu
	type limit formula for a related class of anisotropic nonlocal norms.
\end{abstract}
\maketitle

\begin{center}
	\begin{minipage}{8.5cm}
		\small
		\tableofcontents
	\end{minipage}
\end{center}

\medskip
\section{Introduction}
This paper presents an asymptotic mean value property for harmonic functions for a class of anisotropic nonlocal operators. To introduce the argument, we notice that as known from elementary PDEs facts, a $C^2$ 
function $u:\Omega\subset\R^n\to\R$ is harmonic in
$\Omega$ (i.e. it holds that $\Delta u=0$ in $\Omega$) if and only if it satisfies the mean value property, that is
\begin{equation*}
u(x)=\dashint_{B_r(x)} u(y)dy,\qquad \text{whenever  $B_r(x)\subset\Omega$}.
\end{equation*}
As a matter of fact, this condition can be relaxed to a pointwise formulation by saying that $u\in C^2(\Omega)$ satisfies $\Delta u(x)=0$ at a point $x\in\Omega$ if and only if
\eqlab{\label{media} u(x) =\dashint_{B_r(
x)} u(y)dy + \mathfrak o(r^2), \qquad \mbox{ as } r\to 0.}
This asymptotic formula holds true also in the viscosity sense for any continuous function. 
A similar property can be proved for quasi-linear elliptic operators such as the $p$-Laplace operator $-\Delta_p u$
in the asymptotic form, as the radius $r$ of the ball vanishes. More precisely, Manfredi, Parviainen and Rossi proved in \cite{MPR} that,
if $p\in (1,\infty]$, a continuous function $u:\Omega\subset\R^n\to\R$ is $p$-harmonic in $\Omega$ in viscosity sense if and only if 
\begin{equation}
\label{mediav}
\varphi(x)\geq (\leq)\,\frac{p-2}{2p+2n}\Big(\max_{\overline{B_r(x)}}\varphi+\min_{\overline{B_r(x)}}\varphi\Big)+\frac{2+n}{p+n}\,\dashint_{B_r(x)}\varphi(y)dy
+ \mathfrak o (r^2),
\end{equation}
for any $\varphi\in C^2$ such that $u-\varphi$ has a strict minimum (strict maximum for $\leq$) at $x\in\overline{\Omega}$ at the zero level.
Notice that formula \eqref {mediav} reduces to \eqref{media} for $p=2$. 
Formula \eqref{mediav} holds in the classical sense for smooth functions, at those points $x\in\overline{\Omega}$ such that $\nabla u(x)\neq 0$.
On the other hand, the case $p=\infty$ offers a counterxample for the validity 
of \eqref{mediav} in the classical sense, since the function $|x|^{4/3}-|y|^{4/3}$ 
is $\infty$-harmonic in $\R^2$ in the viscosity sense but \eqref{mediav} fails to hold pointwisely. If $p\in (1,\infty)$ and $n=2$
Arroyo and Llorente \cite{AL} (see also \cite{LM}) proved that the characterization holds in the classical sense. The limit case $p=1$
was finally investigated in \cite{MPR2}. 

Since the local (linear and nonlinear) case is well understood, it is natural to wonder 
about the validity of some kind of asymptotic mean value property in the {\em nonlocal} case. 
As a first approach, we want to investigate this type of property for a nonlocal, linear, anisotropic operator, defined as
	\eqlab{ \label{operator}
		\L u(x)= \int_0^\infty d\rho \int_{\Sf} da(\omega) \frac{\delta (u, x,\rho\omega)}{\rho^{1+2s}},
		}  
		where
		\[ 
		   \delta (u,x,y):= 2u(x)-u(x-y)-u(x+y).
		   \]
Here, $a$ is a non-negative measure on $\Sf$, finite i.e.
\eqlab{\label{finitemeas}  \int_{\Sf} d a \leq \Lambda }
		for some real number $ \Lambda >0$. We refer to this type of measure as \emph{spectral measure}, as it is common in the literature.
		We notice that when the measure $a$ is absolutely continuous with respect to the Lebesgue measure, i.e. when
		\[ da(\omega)= a(\omega) d\H^{n-1}(\omega)\]
		for a suitable, non-negative function $a\in L^1(\Sf)$,  the operator can be represented (using polar coordinates) 
		as
	\eqlab{\label{dos}
		\L u(x)= \int_\Rn \delta (u, x,y)a\left(\frac{y}{|y|} \right)  \frac{dy}{|y|^{n+2s}} .
		}
		Moreover, if $a \equiv 1$ then the formula gets more familiar, as we obtain the well-known fractional Laplace operator (see, e.g. \cite{hitch,nonlocal,Silvphd} and other references therein).  \\
		We remark also that the operator $\L$ is pointwise defined in an open set $\Omega\subset \Rn$ when, for instance, $u\in C^{2s+\eps}(\Omega)\cap L^\infty(\Rn)$. (Here, $C^{2s+\eps}(\Omega)$ denotes $C^{0,2s+\eps}(\Omega)$ or $C^{1,2s+\eps-1}(\Omega)$ for a small $\eps>0$, when $2s+\eps<1$, respectively when $2s+\eps\geq 1$.)\\
		As a matter of fact, the operator \eqref{operator} has been widely studied in the literature, being $\L$ the generator of any stable, symmetric Levy process. In particular, regularity issues for harmonic functions of $\L$ have been studied in papers like \cite{BassChen,sztonyk,basskass,bogdan1,kassmann} (check also other numerous references therein). There, some additional condition are required to the measure, in order to assure regularity. A typical assumption when $\L$ is of the form \eqref{dos} is 
		\bgs{
		0<c\leq a(y)\leq C \qquad \mbox{ in } \Sf,
		}
		or less restrictively
		\[ a(y)\geq c>0 \qquad \mbox{ in a subset of positive measure } \Sigma \subset \Sf.\] 
		Furthermore, for instance in \cite{BassChen} the measure needs not to be absolutely continuous with respect to the Lebesgue measure. In \cite{rosserr,rosval}, the optimal regularity is proved for general operators of the form \eqref{operator}, 
		with the ``ellipticity'' assumption 
	\eqlab{\label{elliptic}
		\inf_{\overline \omega\in \Sf} \int_\Sf |\omega \cdot \overline \omega|^{2s}\, da(\omega)\geq \lambda>0}
for some real number $\lambda$.
We note that the assumptions \eqref{elliptic} are satisfied by any stable operator with the spectral measure which is $n$-dimensional (that is, when the measure is not supported on any proper hyperplane of $\Rn$). 	 We will discuss some details related to this ellipticity requirement in Section \ref{weak} and in Remark \ref{classicalLaplace}. 
 \\

In this paper, for \eqref{operator} and \eqref{finitemeas}, we will adopt a potential theory approach, by using a ``mean kernel'', and provide a necessary and sufficient condition for a function to be  harmonic for $\L$, in the viscosity sense. To be more precise, 
we denote  for some $r>0$
	\bgs{
		\M^s_r u(x) :=
		 c(n,s,a) r^{2s} \int_r^\infty d\rho \int_\Sf da(\omega) 	\frac{u(x+\rho \omega) + u(x-\rho \omega)}{(\rho^2-r^2)^s\rho},		
	}
with
	\[ 
	 c(n,s,a) := \frac{ \sin \pi s}{\pi} \left(\int_\Sf da\right)^{-1}
	. 
	\]

\noindent The following is the main result of the paper.

\begin{theorem}\label{theorem} Let $\Omega\subset \Rn$ be an open set and $u\in L^\infty(\Rn)$. The asymptotic expansion
	\eqlab{ \label{eq1}
	 u(x)=  \M^s_r u(x) 
	 +\mathcal O(r^{2}), \qquad \mbox{ as } r\to 0
	 } 
	holds for all $x\in \Omega$ in the viscosity sense if and only if
	\[ 
	\L u(x) =0 
	\]
in the viscosity sense.
\end{theorem}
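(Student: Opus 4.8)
The plan is to show that the mean-value operator $\M^s_r$ encodes the operator $\L$ in the sense that, for a smooth test function $\varphi$ touching $u$ from above or below at $x$, the quantity $\varphi(x)-\M^s_r\varphi(x)$ agrees with a positive multiple of $r^{2s}\L\varphi(x)$ up to an error $o(r^{2s})$ (and in fact $\mathcal O(r^2)$ once the $r^{2s}$-order term vanishes). The key algebraic identity is the computation of $\M^s_r$ applied to a constant: one checks, via the substitution $t=r^2/\rho^2$ and the Beta-function identity $\int_0^1 t^{s-1}(1-t)^{-s}\,dt = \pi/\sin\pi s$, that $\M^s_r \mathbf 1 = 1$. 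This is why the normalizing constant $c(n,s,a)$ is chosen as it is, and it means $\varphi(x)-\M^s_r\varphi(x) = c(n,s,a)\,r^{2s}\int_r^\infty d\rho\int_\Sf da(\omega)\,\frac{\delta(\varphi,x,\rho\omega)}{(\rho^2-r^2)^s\rho}$, which is a regularized version of $\L\varphi(x)$ with the singular weight $\rho^{-1-2s}$ replaced by $(\rho^2-r^2)^{-s}\rho^{-1}$, cut off at $\rho=r$.

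The next step is the asymptotic analysis of this regularized integral as $r\to0$. First I would split the $\rho$-integral into $\int_r^R$ and $\int_R^\infty$ for a fixed $R$ with $B_R(x)\subset\Omega$. On the outer piece $(R,\infty)$, the kernel $(\rho^2-r^2)^{-s}\rho^{-1}$ converges monotonically to $\rho^{-1-2s}$ as $r\to0$, the increments $\delta(\varphi,x,\rho\omega)$ are bounded by $4\|u\|_{L^\infty}$ (using $\varphi=u$ outside a neighborhood, or just boundedness), and $a$ is finite, so by dominated convergence the outer piece times $r^{2s}$ tends to $0$ like $\mathcal O(r^{2s})$ — unless one is more careful and extracts that it matches the corresponding tail of $\L\varphi(x)$. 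On the inner piece $(r,R)$, I would Taylor-expand: $\delta(\varphi,x,\rho\omega) = -\rho^2\,\omega^{\top}D^2\varphi(x)\,\omega + \mathcal O(\rho^{2+\alpha})$ if $\varphi\in C^{2,\alpha}$ near $x$ (one may take $\varphi$ as smooth as desired since it is a test function), the odd-order terms cancelling by the symmetry of $\delta$. Substituting and integrating the explicit weight against $\rho^2$ over $(r,R)$ produces, after the scaling $t=r^2/\rho^2$, exactly the same Beta-type constant, so that the full expression reconstitutes $c(n,s,a)\,r^{2s}\int_0^\infty d\rho\int_\Sf da(\omega)\,\delta(\varphi,x,\rho\omega)\rho^{-1-2s}$ plus a genuinely smaller remainder. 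The clean way to organize this is to write
\[
\varphi(x)-\M^s_r\varphi(x) = c(n,s,a)\,r^{2s}\,\L\varphi(x) + E(r),
\]
and to show $E(r) = o(r^{2s})$, and moreover $E(r)=\mathcal O(r^2)$ when $\L\varphi(x)=0$; the latter upgrade comes from the fact that once the leading $r^{2s}$-coefficient is $\L\varphi(x)$, the next correction is controlled by $r^2$ because of the $C^2$ regularity and the structure of the kernel — this is precisely the analogue of the $\mathfrak o(r^2)$ in \eqref{media}.

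With the expansion in hand, the equivalence is immediate in the viscosity framework. Suppose $\L u(x)=0$ in the viscosity sense and let $\varphi\in C^2$ touch $u$ from below at $x$ (so $u-\varphi$ has a minimum at $x$, say at level $0$); then $\L\varphi(x)\le 0$ in the classical sense, but the definition of viscosity supersolution/subsolution for the nonlocal operator actually gives $\L\varphi(x)\ge 0$ on the relevant side, hence $\L\varphi(x)=0$, so the expansion yields $\varphi(x) = \M^s_r\varphi(x)+\mathcal O(r^2)$; since $u\le\varphi$ with equality at $x$ and $\M^s_r$ is an averaging (positivity-preserving) operator, $\M^s_r u(x)\le\M^s_r\varphi(x)$, which feeds the one-sided asymptotic inequality that defines \eqref{eq1} in the viscosity sense. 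The reverse implication runs the same way: if \eqref{eq1} holds in the viscosity sense, then testing against $\varphi$ gives one-sided control of $\varphi(x)-\M^s_r\varphi(x) = c(n,s,a)r^{2s}\L\varphi(x)+o(r^{2s})$, and dividing by $c(n,s,a)r^{2s}>0$ and sending $r\to0$ yields the sign of $\L\varphi(x)$ required for $u$ to be a viscosity solution of $\L u=0$. I expect the main obstacle to be the uniformity and error control in the Taylor expansion step — specifically, handling the interplay between the cutoff at $\rho=r$, the mild singularity $(\rho^2-r^2)^{-s}$ of the kernel near $\rho=r$, and the anisotropy of the measure $a$ (which cannot be reduced to a one-dimensional computation as in the isotropic fractional Laplacian), so that the remainder is genuinely $\mathcal O(r^2)$ and not merely $o(r^{2s})$; care is also needed to make the argument work under only $u\in L^\infty(\Rn)$ rather than global smoothness, which forces all regularity to be borrowed from the test function and all tail estimates to use only boundedness and finiteness of $a$.
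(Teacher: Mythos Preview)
Your overall strategy matches the paper's: first establish the pointwise expansion
\[
v(x)-\M_r^s v(x) \;=\; c(n,s,a)\,r^{2s}\,\L v(x) + \mathcal O(r^2)
\]
for any $v\in C^2(\Omega)\cap L^\infty(\R^n)$ (this is the paper's Theorem~\ref{asymp}), and then read off the viscosity equivalence. Your sketch of this expansion is essentially correct; one refinement is that the $\mathcal O(r^2)$ remainder holds unconditionally, not only when $\L v(x)=0$.

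The viscosity step, however, contains a genuine gap. You attempt to conclude $\L\varphi(x)=0$ (both signs) from a single one-sided touching, and then to compare $\M_r^s u$ with $\M_r^s\varphi$ via monotonicity. Neither is right: touching from below yields only $\L v(x)\geq 0$ (the supersolution inequality), never equality; and the inequality ``$u\le\varphi$'' is backwards (touching from below means $\varphi\le u$, so the monotonicity goes the other way). More to the point, this whole comparison is unnecessary. In the nonlocal setting the paper's definitions --- both of ``$\L u=0$ in the viscosity sense'' and of ``\eqref{eq1} holds in the viscosity sense'' --- are stated entirely in terms of the \emph{same} patched function $v=\varphi\,\chi_U+u\,\chi_{\R^n\setminus U}$: one asks for $\L v(x)\geq 0$ (resp.\ $\leq 0$), the other for $v(x)\geq \M_r^s v(x)+\mathcal O(r^2)$ (resp.\ $\leq$). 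Since $v\in C^2$ near $x$ and $v\in L^\infty(\R^n)$, the expansion above applies directly to $v$, and the two one-sided conditions are then equivalent by inspecting the sign of the $r^{2s}$-coefficient: from $\L v(x)\geq 0$ one gets $v(x)-\M_r^s v(x)= c(n,s,a)r^{2s}\L v(x)+\mathcal O(r^2)\geq \mathcal O(r^2)$; conversely, from $v(x)-\M_r^s v(x)\geq -Cr^2$ one divides by $r^{2s}$ and sends $r\to 0$ to obtain $\L v(x)\geq 0$. No comparison between $\M_r^s u$ and $\M_r^s\varphi$ (or $\M_r^s v$) ever enters.
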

\noindent We point out here the paper \cite{FerrariMean}, where the author studies a general type of nonlocal operators defined by means of mean value kernels. Furthermore, the readers can check \cite{Landkof,bucur,AbatLarge} or the very nice recent monography \cite{garofalo} and other references therein, for details on the mean kernel and the mean value property for the fractional Laplacian.\\

As further results, we provide  some asymptotics for $s\nearrow1$ of the operator $\L$ and of the mean value $\M_r^s$. We also prove a Bourgain-Brezis-Mironescu type of formula, for a nonlocal norm related to the operator $\L$. In fact, as $s\nearrow1$, we obtain the ``integer'', local counterpart of the objects under study.
	\\
	
	This paper is organized as follows: in the next section we introduce some notations and some preliminary results. Section \ref{visc} deals with the viscosity setting and with the proof of Theorem \ref{theorem}. In Section \ref{weak} we make some remarks about the weak setting. In the last Section \ref{asymp-sec} we study the asymptotic behavior as $s\nearrow1$ of our fractional operators and prove a Bourgain-Brezis-Mironescu type of formula.
\section{Preliminary results and notations}

\noindent\textbf{Notations}\\
We use the following notations throughout this paper.
\begin{itemize}
\item For some $r>0$ and any $x\in \Rn$
\bgs{ & B_r(x):=\{ y\in \Rn \, \big| \, |x-y|<r\} , \qquad B_r:=B_r(0).\\
 &\Sf =\partial B_1.}
	 \item For  any $x>0$, the Gamma function is (see \cite{abramowitz}, Chapter 6):
\bgs{ \Gamma(x)=\int_0^{\infty}t^{x-1}e^{-t}\, dt.\label{gamma}
}
\item For  any $x,y>0$, the Beta function is (see \cite{abramowitz}, Chapter 6):
\bgs{ \beta(x,y)=\int_0^{\infty}\frac{t^{x-1}}{(t+1)^{x+y}}\, dt.\label{beta}
}	
	\end{itemize}

\noindent We remark as a first thing the following integral indentity. 
\begin{lemma} \label{lem1}
For any $r>0$
\[ \frac{2\sin \pi s}{\pi} r^{2s} \int_r^\infty \, \frac{d\rho }{(\rho^2 -r^2)^s \rho}  
 =1. \label{Ir} \]
\end{lemma}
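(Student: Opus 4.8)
The plan is to evaluate the integral in closed form, reducing it to the Beta function recalled above and then invoking Euler's reflection formula for the Gamma function; throughout, $s\in(0,1)$.

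\emph{Step 1 (change of variables).} I would substitute $\rho=r\sqrt{1+t}$, a smooth increasing bijection of $(r,\infty)$ onto $(0,\infty)$. Since $\rho^2-r^2=r^2t$ and $\rho\,d\rho=\tfrac{r^2}{2}\,dt$, this gives $\dfrac{d\rho}{(\rho^2-r^2)^s\rho}=\dfrac{t^{-s}}{2r^{2s}}\cdot\dfrac{dt}{1+t}$, hence
\[
r^{2s}\int_r^\infty\frac{d\rho}{(\rho^2-r^2)^s\rho}=\frac12\int_0^\infty\frac{t^{-s}}{1+t}\,dt .
\]
The right-hand integral converges, its integrand being $\sim t^{-s}$ as $t\to0^+$ (integrable since $s<1$) and $\sim t^{-1-s}$ as $t\to+\infty$; equivalently, the only singularity of the original integrand is at $\rho=r$, where $(\rho^2-r^2)^{-s}\sim(2r)^{-s}(\rho-r)^{-s}$ is integrable precisely because $s<1$.

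\emph{Step 2 (Beta function and reflection formula).} Comparing with the definition of $\beta$ recalled above, the last integral equals $\beta(1-s,s)$ (take $x=1-s$, $y=s$, so that $x-1=-s$ and $x+y=1$). Using $\beta(x,y)=\Gamma(x)\Gamma(y)/\Gamma(x+y)$ together with $\Gamma(1)=1$, this is $\Gamma(1-s)\Gamma(s)$, and Euler's reflection formula $\Gamma(s)\Gamma(1-s)=\pi/\sin\pi s$ then yields
\[
r^{2s}\int_r^\infty\frac{d\rho}{(\rho^2-r^2)^s\rho}=\frac{\pi}{2\sin\pi s}.
\]
Multiplying both sides by $2\sin\pi s/\pi$ gives the claimed identity.

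There is essentially no serious obstacle here: the only points deserving care are choosing a substitution that produces exactly the normalization of $\beta$ used in the paper (the alternative choice $t=r^2/\rho^2$ instead leads to $\tfrac12\int_0^1 t^{s-1}(1-t)^{-s}\,dt=\tfrac12\,\beta(s,1-s)$, with the same value by symmetry of $\beta$), and recording the integrability of the endpoint singularity at $\rho=r$, which is what forces the constant to be finite.
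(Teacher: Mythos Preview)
Your proof is correct and follows essentially the same route as the paper: the paper performs the same change of variables to obtain $\tfrac12\int_0^\infty \frac{dt}{t^s(t+1)}=\tfrac12\beta(1-s,s)=\tfrac{\pi}{2\sin\pi s}$, citing the relevant formulas in Abramowitz--Stegun. Your version is slightly more detailed (you record the explicit substitution, check integrability at the endpoints, and spell out the reflection formula), but the argument is identical.
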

\begin{proof}
Changing coordinates, we get that
	\bgs{
		r^{2s} \int_r^\infty \, \frac{d\rho }{(\rho^2 -r^2)^s \rho} = \frac12 \int_0^\infty \frac{dt}{t^s(t+1)} =\frac{\beta(1-s,s)}2
	= \frac{\pi}{2 \sin(\pi s)},
	}
where we have used formulas 6.2.2 and 6.1.17 in \cite{abramowitz}. 
\end{proof}

\noindent
We obtain the asymptotic mean value property for smooth functions, as follows.
\begin{theorem}\label{asymp}
	Let $u\in C^{2}(\Omega)\cap L^\infty(\Rn)$. Then 	
	\bgs{ u(x)= \M_r^s u(x) +
		  c(n,s,a) r^{2s} \L u(x)	  +\mathcal O(r^{2}) 
	}
	as $r\to 0$.
\end{theorem}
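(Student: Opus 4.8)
The plan is to compute $\M_r^s u(x)$ directly by Taylor-expanding $u$ around $x$ and isolating the leading terms. First I would write $u(x+\rho\omega)+u(x-\rho\omega) = 2u(x) - \delta(u,x,\rho\omega)$, so that
\[
\M_r^s u(x) = c(n,s,a)\,r^{2s}\int_r^\infty d\rho \int_\Sf da(\omega)\,\frac{2u(x)-\delta(u,x,\rho\omega)}{(\rho^2-r^2)^s\rho}.
\]
The first piece, carrying the factor $2u(x)$, is handled by Lemma \ref{lem1}: since $c(n,s,a) = \frac{\sin\pi s}{\pi}\big(\int_\Sf da\big)^{-1}$, the $\omega$-integral of $2u(x)\,da(\omega)$ pulls out $2u(x)\int_\Sf da$, and then $\frac{2\sin\pi s}{\pi} r^{2s}\int_r^\infty \frac{d\rho}{(\rho^2-r^2)^s\rho} = 1$ gives exactly $u(x)$. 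So
\[
\M_r^s u(x) = u(x) - c(n,s,a)\,r^{2s}\int_r^\infty d\rho \int_\Sf da(\omega)\,\frac{\delta(u,x,\rho\omega)}{(\rho^2-r^2)^s\rho},
\]
and it remains to show the last term equals $-c(n,s,a)r^{2s}\L u(x) + \mathcal O(r^2)$.

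Next I would compare the kernel $\frac{1}{(\rho^2-r^2)^s\rho}$ on $(r,\infty)$ with the kernel $\frac{1}{\rho^{1+2s}}$ on $(0,\infty)$ appearing in the definition \eqref{operator} of $\L$. Split the $\rho$-integral at some fixed radius (say $\rho=1$, or any radius keeping $\supp$ considerations trivial since $u\in L^\infty$). For the far region $\rho$ bounded away from $r$, the difference of the two kernels is $\mathcal O(r^2)$ pointwise (expand $(\rho^2-r^2)^{-s} = \rho^{-2s}(1 + s r^2/\rho^2 + \dots)$), and since $\delta(u,x,\rho\omega)$ is bounded there by $4\|u\|_{L^\infty}$ and integrable against $\rho^{-1-2s}d\rho\,da$, this contributes $\mathcal O(r^2)$ after multiplication by $r^{2s}$... wait, one must be careful: the prefactor is $r^{2s}$, not $r^2$, so I actually want to show the far-region integral against $\frac{1}{(\rho^2-r^2)^s\rho}$ differs from the same integral against $\rho^{-1-2s}$ by $\mathcal O(r^2/r^{2s})$, i.e. the kernel difference must be $\mathcal O(r^2 \rho^{-3-2s})$, which it is. For the near region $\rho$ close to $r$ (small), use the $C^2$ bound $|\delta(u,x,\rho\omega)| \le C\|u\|_{C^2(\Omega)}\rho^2$ to control $\int_r^{r_0} \frac{\rho^2}{(\rho^2-r^2)^s\rho}d\rho = \int_r^{r_0}\frac{\rho\,d\rho}{(\rho^2-r^2)^s}$, which is $\mathcal O(r^{2-2s})$ (the substitution $t=\rho^2-r^2$ makes this explicit), so after multiplying by $c(n,s,a)r^{2s}$ one gets $\mathcal O(r^2)$; and similarly $\int_r^{r_0}\frac{\rho\,d\rho}{\rho^{1+2s}}$ against the same $C^2$ bound is $\mathcal O(r^{2-2s})$, contributing $\mathcal O(r^2)$. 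Hence up to $\mathcal O(r^2)$ one may replace $\M_r^s$'s kernel by the $\L$-kernel on all of $(0,\infty)$, giving
\[
\M_r^s u(x) = u(x) - c(n,s,a)\,r^{2s}\int_0^\infty d\rho\int_\Sf da(\omega)\,\frac{\delta(u,x,\rho\omega)}{\rho^{1+2s}} + \mathcal O(r^2) = u(x) - c(n,s,a)\,r^{2s}\L u(x) + \mathcal O(r^2),
\]
which rearranges to the claimed formula.

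The main obstacle is bookkeeping the interplay between the prefactor $r^{2s}$ and the two regimes: the error must be genuinely $\mathcal O(r^2)$ rather than merely $\mathcal O(r^{2s})$ or $\mathcal O(r^{2-2s})$, so both the kernel-replacement estimate in the far region (needs the $r^2$-order Taylor remainder of $(\rho^2-r^2)^{-s}$ in $r$, uniformly in $\rho\ge r_0$) and the near-region estimate (needs the full $C^2$ bound $|\delta|\le C\rho^2$, not just continuity) must be carried out with the correct powers of $r$. One should also note that $\delta(u,x,\rho\omega)$ is only controlled by $\rho^2$ while $\rho\omega\in\Omega$; for $\rho$ large it is merely bounded, but there the kernel decay $\rho^{-1-2s}$ secures absolute convergence of $\L u(x)$, which is finite because $u\in C^2(\Omega)\cap L^\infty(\Rn)$, consistent with the pointwise-definedness remark in the introduction. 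Once these estimates are in place, the identity is immediate from Lemma \ref{lem1}.
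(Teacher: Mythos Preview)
Your overall architecture matches the paper's: use Lemma~\ref{lem1} to turn $u(x)-\M_r^su(x)$ into an integral of $\delta(u,x,\rho\omega)$ against the kernel $(\rho^2-r^2)^{-s}\rho^{-1}$, then compare that kernel with $\rho^{-1-2s}$. The far-region step (Taylor-expand $(1-r^2/\rho^2)^{-s}$) is fine and is exactly what the paper does.

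The gap is in the near region. You assert that
\[
\int_r^{r_0}\frac{\rho\,d\rho}{(\rho^2-r^2)^s}\ \text{is}\ \mathcal O(r^{2-2s}),
\]
but the substitution $t=\rho^2-r^2$ gives $\dfrac{(r_0^2-r^2)^{1-s}}{2(1-s)}\to \dfrac{r_0^{2-2s}}{2(1-s)}$, a nonzero constant; likewise $\int_0^{r_0}\rho^{1-2s}\,d\rho=\dfrac{r_0^{2-2s}}{2-2s}$. So each near-region piece is $\mathcal O(1)$, and after the prefactor $r^{2s}$ each is only $\mathcal O(r^{2s})$. Bounding them \emph{separately}, as you propose, therefore yields a remainder $\mathcal O(r^{2s})$, strictly weaker than the claimed $\mathcal O(r^2)$ (and insufficient for the application in Theorem~\ref{theorem}, where one divides by $r^{2s}$).

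What is missing is the cancellation between the two near-region contributions. The paper obtains it by first rescaling $\rho\mapsto r\rho$ (so the prefactor $r^{2s}$ is absorbed and the near region becomes $(1,\delta/r)$), and then bounding the \emph{difference} of kernels using the exact antiderivative
\[
\int \rho^{1-2s}\Big(\frac{\rho^{2s}}{(\rho^2-1)^s}-1\Big)\,d\rho=-\frac{\rho^{2-2s}-(\rho^2-1)^{1-s}}{2-2s},
\]
which stays bounded as the upper limit $\delta/r\to\infty$. Combined with $|\delta(u,x,r\rho\omega)|\le r^2\rho^2\|u\|_{C^2}$ this gives the genuine $\mathcal O(r^2)$. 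Your far-region kernel estimate $\mathcal O(r^2\rho^{-3-2s})$ cannot be pushed down to $\rho$ near $r$ (the difference blows up there), so some version of this cancellation argument is unavoidable.
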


\begin{proof}
We fix $x\in \Omega$ and $\delta>0$ such that $B_{2\delta}(x)\subset \Omega$. For any $0<r<\delta$, by Lemma \ref{lem1} we have that
\bgs{\label{first} 
	{u(x)- \M_r^s u(x)}
	=&\, c(n,s,a) r^{2s} \int_r^\infty d\rho \int_\Sf da(\omega)\frac{ 2u(x)}{(\rho^2-r^2)^s\rho}- \M_r^s u(x)
		\\
	  = &\, {c(n,s,a)}r^{2s}\int_r^\infty d\rho \int_\Sf da(\omega)  \frac{\delta(u,x,\rho\omega)}{(\rho^2 -r^2)^s \rho}
	  \\
	  =&\, c(n,s,a) \int_1^\infty d\rho \int_\Sf da(\omega)  \frac{\delta(u,x,r \rho\omega)}{(\rho^2 -1)^s \rho}. 
	  }
Notice that $1<\delta/r$, so we write
\eqlab{\label{sec} \frac{u(x)- \M_r^su(x)}{c(n,s,a)}&= 
		 \int_{\frac{\delta}{r}}^\infty d\rho \int_\Sf da(\omega)  \frac{\delta(u,x,r \rho\omega)}{(\rho^2 -1)^s \rho} 
		 +
		 \int_1^{\frac{\delta}{r}} d\rho \int_\Sf da(\omega)  \frac{\delta(u,x,r \rho\omega)}{(\rho^2 -1)^s \rho} 
		 	\\
		&=: \mathcal I_1+ \mathcal I_2.
	}
We have that
\bgs{ 
	\mathcal I_1 &= 
	 	\int_{\frac{\delta}{r}}^\infty d\rho \int_\Sf da(\omega)  \frac{\delta(u,x,r \rho\omega)}{ \rho^{1+2s}} \frac{1} {\left(1-\frac{1}{\rho^2}\right)^s} .
	}
Denote by 
	\[t:= \frac{1}{\rho}\in \left(0,\frac{r}{\delta}\right) \] and for $r$ small enough with a Taylor expansion we have  that
	\[ (1-t^2)^{-s}= 1+ st^2 + \mathfrak o (t^2).\]
	So
\bgs{ 
	\mathcal I_1 = 
	&	\; 	\int_{\frac{\delta}{r}}^\infty d\rho \int_\Sf da(\omega)  \frac{\delta(u,x,r \rho\omega)}{ \rho^{1+2s}}
	 + s 	\int_{\frac{\delta}{r}}^\infty d\rho \int_\Sf da(\omega)  \frac{\delta(u,x,r \rho\omega)}{ \rho^{3+2s}} \\
	&\; + 
	\int_{\frac{\delta}{r}}^\infty d\rho \int_\Sf da(\omega)  \delta(u,x,r \rho\omega)
	    \mathfrak o \left({\rho^{-3-2s}}\right).
	}
Notice that
 \bgs{ 
 	\left| \int_{\frac{\delta}{r}}^\infty d\rho \int_\Sf da(\omega)  \frac{\delta(u,x,r \rho\omega)}{ \rho^{3+2s}}\right|  
 	  	\leq & \, 4\|u\|_{L^\infty(\Rn)} \int_\Sf da \int_{\frac{\delta}{r}}^\infty \frac{d\rho}{\rho^{3+2s}}
 	  	\\
 	  	\leq &\,  2\Lambda \|u\|_{L^\infty(\Rn)}\frac{r^{2+2s}\delta^{-2-2s}}{1-s} .  	
 	  	}
 	  	 	It follows that
\bgs{ 
	\mathcal I_1 = &\;		\int_{\frac{\delta}{r}}^\infty d\rho \int_\Sf da(\omega)  \frac{\delta(u,x,r \rho\omega)}{ \rho^{1+2s}} +\mathcal O( r^{2+2s}).
	}	
	On the other hand we write
\bgs{ 
	\mathcal I_2 &=
	 \int_1^{\frac{\delta}{r}} d\rho \int_\Sf da(\omega)  \frac{\delta(u,x,r \rho\omega)}{ \rho^{1+2s}} \\
	 &
	+ \int_1^{\frac{\delta}{r}} d\rho \int_\Sf da(\omega)  \frac{\delta(u,x,r \rho\omega)}{ \rho^{1+2s}}
	\left( \frac{\rho^{2s}}{(\rho^2-1)^s} -1\right) .
		}
		Then putting together $\mathcal I_1$ and $\mathcal I_2$ into \eqref{sec} we have that
		\eqlab{\label{aaar}
	\frac{u(x)- \M_r^su(x)}{c(n,s,a)}
				=&\; 	 \int_1^\infty d\rho \int_\Sf da(\omega)  \frac{\delta(u,x,r \rho\omega)}{ \rho^{1+2s}} \\
	   	&\;
	   	 	+ \int_1^{\frac{\delta}{r}} d\rho \int_\Sf da(\omega)  \frac{\delta(u,x,r \rho\omega)}{ \rho^{1+2s}}
	\left( \frac{\rho^{2s}}{(\rho^2-1)^s} -1\right)
	  +\mathcal O( r^{2+2s})
	  \\ 
	  =&\; 	 \int_0^\infty d\rho \int_\Sf da(\omega)  \frac{\delta(u,x,r \rho\omega)}{ \rho^{1+2s}} \\
	   	&\;
	   	 	+ \int_1^{\frac{\delta}{r}} d\rho \int_\Sf da(\omega)  \frac{\delta(u,x,r \rho\omega)}{ \rho^{1+2s}}
	\left( \frac{\rho^{2s}}{(\rho^2-1)^s} -1\right) 
	\\
	&\; -  	 \int_0^1 d\rho \int_\Sf da(\omega)  \frac{\delta(u,x,r \rho\omega)}{ \rho^{1+2s}}  +\mathcal O( r^{2+2s})
	  \\
	  =: &\;  r^{2s} \L u(x) + \mathcal J +\mathcal O( r^{2+2s}).
	  	  }
Recalling that $u\in C^{2}(\Omega)$,  both for $\rho \in(1,\delta/r)$  and for $\rho\in(0,1)$ we have that
	\[ 
		|\delta(u,x,r \rho\omega)|\leq  r^2 \rho^{2} \|u\|_{C^2(B_\delta(x))}.
		\]	
We thus obtain
\bgs{
	\left| \mathcal J \right| 
	\leq &\; \|u\|_{C^2(\Omega)} r^{2}  \int_\Sf da(\omega) \bigg( \int_1^{\frac{\delta}{r}} d\rho\,  \rho^{1-2s} \left( \frac{\rho^{2s}}{(\rho^2-1)^s} -1\right)\,  + \int_0^1 d\rho\, \rho^{1-2s} \bigg)
		\\
	=& C\frac{ r^2 (1+ \mathcal O (r^{2s}) ) }{2(1-s)} .
		}
		The last line follows since
		\[
		\int \rho^{1-2s} \left( \frac{\rho^{2s}}{(\rho^2-1)^s} -1\right)\, d\rho = 
		- \frac{\rho^{2-2s}-(\rho^2-1)^{1-s}}{2-2s}.
		\]
		  Hence in \eqref{aaar} we finally get that
  	\[
 	{u(x)-\M^s_r u(x)}=  {c(n,s,a)} r^{2s}  \L u(x)+\mathcal O(r^{2}) . 	
 	\]	
	This concludes the proof of the theorem.
\end{proof}

\section{Viscosity setting}\label{visc}

We begin by giving the definitions for the viscosity setting. First of all (as in \cite{caffy}), we define the notion of viscosity solution.
\begin{definition} A function $u\in L^\infty(\Rn)$, lower (upper) semi-continuous in $\overline \Omega$ is a viscosity supersolution (subsolution) to 
	\[
		\L u=0, \qquad \mbox{ and we write } \quad \L u \leq \,(\geq) \,  0
	\]
	if for every $x\in\Omega$, any neighborhood $U=U(x)\subset \Omega$ and any $\varphi \in C^2(\overline U)$ such that
	\bgs{ 
		& \varphi (x)=u(x)
		\\
		& \varphi(y)< \, (>) \,u(y), \quad \mbox{ for any } y\in U\setminus \{x\},
		}
	if we let
		\syslab[v =]{&\varphi , \quad \mbox{ in }  U	\\
							&u, \quad \mbox{ in } \Rn\setminus U	
							\label{vvv} 
							}
				then
				 \bgs{
				 	\L v (x) \geq \,(\leq)\,0.
				 }			
	A viscosity solution of $\L u =0$ is a (continuous) function that is both a subsolution and a supersolution. 
\end{definition}	

\begin{definition} \label{meanvisc} A function $u\in L^\infty(\Rn)$, lower (upper) semi-continuous in $\overline \Omega$, verifies for any $x\in \Omega$ 
 	\[
		u(x)= \M_r^s u(x)+ \mathcal O(r^{2})
 	\]
	as $r\to 0$, in a viscosity sense, if for any 
	neighborhood $U=U(x)\subset \Omega$ and any $\varphi \in C^2(\overline U)$ such that
	\bgs{ 
		& \varphi (x)=u(x)
		\\
		& \varphi(y)<\, (>)\,  u(y), \quad \mbox{ for any } y\in U\setminus \{x\},
		}
if we let
		\sys[v=]{&\varphi, \quad \mbox{ in }  U	\\
							&u , \quad \mbox{ in } \Rn\setminus U	}
				then
	\eqlab{ \label{opium2}
		v(x) \geq \, (\leq) \, \M_r^s v(x)+ \mathcal O(r^{2}).
 		}
\end{definition}	

\noindent We can now prove the main theorem of this paper.
\begin{proof}[Proof of Theorem \ref{theorem}]
Let $x\in \Omega$ and any $R>0$ be such that $\overline {B_R(x)}\subset \Omega$. Let $\varphi \in C^2(\overline{B_R(x)})$ be such that
\bgs{ 
		& \varphi (x)=u(x)
		\\
		& \varphi(y)< \, u(y), \quad \mbox{ for any } y\in B_R(x)\setminus\{x\}.
		}
		We let $v$ be defined as in \eqref{vvv}, hence $v\in C^2({B_R(x)})\cap L^\infty(\Rn)$.	
		By Theorem \ref{asymp} we have that 
		\eqlab{ \label{asaf}
			v(x)= \M_r^sv(x) +  c(n,s,a) r^{2s} \L v(x) + \mathcal O(r^{2}).
			}
			We prove at first that if $u$ satisfies \eqref{eq1} in the viscosity sense given by Definition \ref{meanvisc} then $u$ is a supersolution of
$\L u(x)=0$ in the viscosity sense.
Since
\[
			 v(x)\geq \M_r^s v(x)+ \mathcal O (r^{2})
		\]
		dividing by $r^{2s}$ in \eqref{asaf} and sending $r\to 0$, it follows that 
		\[
			\L v(x)\geq 0.
		\]
		At the same manner, one proves that $u$ is a subsolution of $\L u(x)=0$ in the viscosity sense.
		
In order to prove the other implication, if $u$ is a supersolution, one has from \eqref{asaf} that
	\[
		\limsup_{r\to 0} \frac{v(x)-\M_r^s v(x)}{r^{2s}}  \geq 0,
		\]
		hence \eqref{opium2}. In the same way, one gets the conclusion when $u$ is a subsolution.
\end{proof}
%

\section{Some remarks about the weak setting}\label{weak}
In this section we consider the weak setting, and in particular provide the condition for a weak solution to be a  pointwise, and a viscosity solution. In this sense, Theorem \ref{theorem} applies also to weak solutions.
We consider here $\Omega$ to be an open bounded set, with $C^2$ boundary.\\

\noindent
Let us now define the following norms, semi-norms and spaces:
\[
	[u]_{H^1_a(\Rn)}:=\left(\int_{\Rn}dx\int_\Sf da(\omega) \left(\nabla u(x)\cdot \omega\right)^2\right)^{\frac12}, \qquad 
\|u\|_{H^1_a(\Rn)} := [u]_{H^1_a(\Rn)} + \|u\|_{L^2(\Rn)},
\]
\bgs{
 &H_a^1(\Rn):=\left\{ u\in L^2(\Rn)\, \big| \,\, [u]_{H^1_a(\Rn)} <+\infty\right\},
\qquad 
&H_{a,0}^1(\Rn):= \overline{C^\infty_c(\Rn)}^{\|\cdot\|_{H_a^1(\Rn)}},
}
and 
\[
	[ u]_{H^s_a(\Rn)}:=\left(\int_\Rn dx \int_\R d\rho \int_\Sf da(\omega) \frac{\left(u(x)-u(x+\rho\omega)\right)^2}{|\rho|^{1+2s}} \right)^{\frac12}
 	.\]
Taking into account that
$$
\|v\|:=\left(\int_\Sf (v\cdot \omega)^2da(\omega) \right)^{\frac12},\quad v\in\R^n,
$$ 	
is a norm in $\R^n$, it is readily seen that $H_a^1(\Rn)$ is a Banach space.

We take the operator $\L$ that satisfies the ellipticity assumption \eqref{elliptic} and we consider weak solutions of the equation 
\[ \L u=0 \quad \mbox{ in } \Omega.\]
In particular, we take  $u\in L^\infty(\Rn)$  of finite energy, i.e. such that
\eqlab{\label{finiteenergy}
[u]_{H^s_a(\Rn)}<\infty,
}
and look for critical points of the energy 
\[ 
\E(u):=\frac{1}{4}\int_\Rn dx \int_\R d\rho \int_\Sf da(\omega) \frac{\left(u(x)-u(x+\rho\omega)\right)^2}{|\rho|^{1+2s}} .
\]
Then for any $\varphi \in C^\infty_c(\Omega)$, we compute formally 
\[ 
\E'(u)[\varphi]= \frac{1}{2}\int_\Rn \,dx \int_\R d\rho \int_\Sf da(\omega) \frac{\big(u(x)-u(x+\rho\omega) \big) (\varphi(x) -\varphi(x+\rho\omega) \big)}{|\rho|^{1+2s}} .
\]
So we say that $u\in L^\infty(\Rn)$  of finite energy is a weak solution of 
\[ \L u=0 \quad \mbox{ in } \Omega\]
if and only if
\eqlab{\label{weakform}
\E'(u)[\varphi]=0,\quad \forall\varphi \in C^\infty_c(\Omega). 
}

\begin{remark} \label{weakprop}
Let us justify the fact that if $u\in L^\infty(\Rn)\cap C^2(\Omega)$, of finite energy satisfies \eqref{weakform}, then $\L u=0$  in $\Omega$ pointwisely.     
We have that
\bgs{
	\E'(u)[\varphi]= &\, \frac{1}{2}\ \int_\Rn \,dx \int_\R d\rho \int_\Sf da(\omega) \frac{\big(u(x)-u(x+\rho\omega) \big) \varphi(x) }{|\rho|^{1+2s}} 
	\\
	&\, - \frac{1}{2}\int_\Rn \,dx \int_\R d\rho \int_\Sf da(\omega) \frac{\big(u(x)-u(x+\rho\omega) \big) \varphi(x+\rho\omega) }{|\rho|^{1+2s}}  
	\\
	= &\, \frac{1}{2} \int_\Rn \,dx \int_\R d\rho \int_\Sf da(\omega) \frac{\big(2u(x)-u(x+\rho\omega) -u(x-\rho\omega)\big) \varphi(x) }{|\rho|^{1+2s}} 
	\\
	=&\, \int_\Rn \,dx \,\L u(x)\varphi(x) ,
	}
	where we have used the change of variable $y=x+\rho\omega$ and  the symmetry in $\rho$.
So indeed $\E'(u)[\varphi]=0$ for any $\varphi\in C^\infty_c(\Omega)$ implies that $\L u= 0$ almost everywhere in $\Omega$.
Furthermore, proceeding  as in  \cite[Proposition 2.1.4]{Silvphd} we have that $\L u$ is continuous in $\Omega$, hence $\L u=0$ pointwise in $\Omega$.
\end{remark}

The following is the main result of this section. 
\begin{theorem} Let $u\in L^\infty(\Rn)$ of finite energy be a weak solution of 
\[ \L u =0 \qquad \mbox{ in }\;  \Omega.\] Then for any $\Omega'\subset\subset \Omega$ we have that $u\in C(\overline{\Omega'})$ is a viscosity solution of 
\[ \L u=0 \qquad \mbox{ in }\; \Omega'.\]
\end{theorem}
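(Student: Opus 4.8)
The plan is to decompose the statement into three steps: (i) an interior regularity step, upgrading the finite-energy weak solution $u$ to a classical ($C^2$, in fact $C^\infty$) solution inside $\Omega$; (ii) the passage from the weak to the pointwise equation, which is exactly the content of Remark \ref{weakprop}; and (iii) a short comparison argument showing that a pointwise solution is automatically a viscosity solution in the sense of the definitions of Section \ref{visc}. Steps (ii) and (iii) are essentially formal; step (i) carries all the weight.

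For step (i): since $\L$ satisfies the ellipticity assumption \eqref{elliptic}, the interior regularity theory for operators of the form \eqref{operator} developed in \cite{kassmann,BassChen,bogdan1,rosserr,rosval} applies. A bounded, finite-energy weak solution of $\L u = 0$ in $\Omega$ (with exterior/tail contribution controlled by $\|u\|_{L^\infty(\Rn)}$) is Hölder continuous in the interior, and in fact lies in $C^{2s+\eps}_\loc(\Omega)$ for a small $\eps>0$. Because $\L$ has constant coefficients, i.e. commutes with translations, the translates $\tau_h u$ are again bounded weak solutions on slightly smaller domains, and a standard bootstrap (differentiating the equation, or iterating on incremental quotients, which remain bounded and of finite energy on nested subdomains) yields $u \in C^2_\loc(\Omega)$, indeed $u\in C^\infty_\loc(\Omega)$. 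In particular $u\in C(\overline{\Omega'})$ for every $\Omega'\subset\subset\Omega$, the quantity $\L u(x)$ is pointwise well defined at each $x\in\Omega$ since $u\in C^2_\loc(\Omega)\cap L^\infty(\Rn)$, and by Remark \ref{weakprop} the weak identity \eqref{weakform} forces $\L u(x)=0$ at every point $x\in\Omega$.

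For step (iii), fix $\Omega'\subset\subset\Omega$, a point $x\in\Omega'$, a neighborhood $U=U(x)\subset\Omega'$ and a test function $\varphi\in C^2(\overline U)$ with $\varphi(x)=u(x)$ and $\varphi(y)<u(y)$ for all $y\in U\setminus\{x\}$; set $v=\varphi$ on $U$ and $v=u$ on $\Rn\setminus U$, so that $v\in C^2(U)\cap L^\infty(\Rn)$ and $\L v(x)$ is well defined. By construction $v\le u$ on all of $\Rn$ with equality at $x$, hence for every $\rho>0$ and $\omega\in\Sf$,
\[
\delta(v,x,\rho\omega)=2u(x)-v(x+\rho\omega)-v(x-\rho\omega)\ \ge\ 2u(x)-u(x+\rho\omega)-u(x-\rho\omega)=\delta(u,x,\rho\omega).
\]
Integrating this inequality against the non-negative kernel $\rho^{-1-2s}\,d\rho\,da(\omega)$ gives $\L v(x)\ge \L u(x)=0$, which is precisely the supersolution requirement; running the symmetric argument with $\varphi$ touching $u$ from above gives $\L v(x)\le 0$. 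Thus $u$ is both a viscosity sub- and supersolution, i.e. a viscosity solution of $\L u=0$ in $\Omega'$.

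The main obstacle is step (i): one must check that the present hypotheses — $u\in L^\infty(\Rn)$, finite $H^s_a$-energy, and $a$ a finite spectral measure satisfying \eqref{elliptic} — fall exactly within the scope of the cited De Giorgi–Nash–Moser and Schauder estimates, and that the translation-bootstrap is legitimate (the increments must stay bounded and of finite energy on nested subdomains, with the nonlocal tail absorbed into $\|u\|_{L^\infty(\Rn)}$). Once interior smoothness is in hand, the remaining content — Remark \ref{weakprop} for the pointwise equation, and the one-sided monotonicity of $\delta$ under touching for the viscosity reformulation — is elementary.
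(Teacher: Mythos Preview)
Your decomposition is sensible and step (iii) is exactly right (the paper uses the same monotonicity argument for $\delta$), but step (i) has a real gap. For nonlocal operators the Schauder/De Giorgi estimates you invoke require control of $u$ \emph{on all of $\Rn$}, not just on nested subdomains: the incremental quotients $D_h u$ satisfy $\L(D_h u)=0$ on a smaller set, but they are not uniformly bounded in $L^\infty(\Rn)$ because outside $\Omega$ you only know $|D_h u|\le 2\|u\|_{L^\infty}/|h|$, which blows up as $h\to 0$. The tail cannot simply be ``absorbed into $\|u\|_{L^\infty(\Rn)}$.'' The paper in fact remarks, right after this theorem, that in the Schauder estimate of \cite{rosserr} the hypothesis $u\in C^\alpha(\Rn)$ (global, not local) \emph{cannot} be removed to obtain $C^{2s+\alpha}$ regularity. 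So the naive translation bootstrap does not close, and as written your step (i) does not reach $C^2_\loc$; one would have to insert a localization/cutoff argument (splitting $u=u_1+u_2$ with $u_1$ compactly supported and globally H\"older, and observing that $\L u_2$ is smooth on compact subsets since $u_2$ vanishes there) to make the iteration go through.

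The paper sidesteps this entirely. It uses \cite[Theorem 1.1 (a)]{rosserr} only to get $u\in C(\overline{\Omega'})$, and instead of upgrading $u$ itself, it mollifies: $u_\eps=u*\varphi_\eps$ is globally $C^\infty$ (so $\L u_\eps$ is classically defined with no tail issue), and a Fubini computation shows $\E'(u_\eps)[\psi]=\E'(u)[\psi_\eps]=0$, hence $u_\eps$ is still a weak solution on $\Omega'$. Remark \ref{weakprop} then gives $\L u_\eps(x)=0$ pointwise, your step (iii) argument makes $u_\eps$ a viscosity solution, and finally the stability of viscosity solutions under locally uniform limits (\cite[Corollary 4.6]{caffy}) passes the conclusion to $u$. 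This mollify-then-pass-to-the-limit route is both shorter and more robust: it needs only interior continuity of $u$, not interior $C^2$, and it never confronts the nonlocal-tail obstruction to bootstrapping.
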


\begin{proof}
First of all notice that thanks to  \cite[Theorem 1.1, a)]{rosserr} we have that $u\in C(\overline{\Omega'})$. Using the approach in Theorem \cite[Theorem 1]{Valdy}, we do the following. 
We consider a sequence of mollifiers of $u$, more precisely we take $\varphi \in C^\infty_c(\Omega')$ and 
\[ \varphi_\eps(x)=\frac{1}{\eps^n} \varphi\left(\frac{x}{\eps}\right), \qquad u_\eps(x)=u*\varphi_\eps(x)=\int_{\Rn} u(x-y)\varphi_\eps(y)\, dy.\]
The basic properties of mollifiers give us that
\eqlab{\label{molly} & u_\eps\in C^\infty(\Rn) &\\
		& u_\eps \xrightarrow[\, \, \eps \to 0\, \,  ]{} u   &&\mbox{ a.e. in } \Rn \\
		&u_\eps  \xrightarrow[\, \, \eps \to 0\, \, ]{} u   &&\mbox{ locally uniformly in } \Omega'.} 
		Furthermore we have that (for $\eps$ small enough) in the weak sense
		\eqlab{\label{weak11} \L u_\eps=0 \quad \mbox{ in } \Omega'.}
		Indeed for any $\psi \in C^\infty_c(\Omega')$, using Fubini we get that 
		\bgs{ \E'( u_\eps)[\psi]=&\, \int_{\Rn} dx \int_\R d\rho \int_\Sf da(\omega) \frac{\big(u_\eps(x)-u_\eps(x+\rho\omega)\big)\big(\psi(x)-\psi(x+\rho\omega)\big) }{|\rho|^{1+2s}}
		\\
		=&\,   \int_{\Rn} dx \int_\R d\rho \int_\Sf da(\omega)  \frac{ \psi(x)-\psi(x+\rho\omega) }{|\rho|^{1+2s}} \int_\Rn dz \,\big(u(z)-u(z+\rho\omega)\big)\varphi_\eps(x-z)
		\\
		=&\;  \int_\Rn dz  \int_\R d\rho \int_\Sf da(\omega) \frac{u(z)-u(z+\rho\omega)}{|\rho|^{1+2s}}\int_\Rn dx\, \big(\psi(x)-\psi(x+\rho\omega) \big)\varphi_\eps(x-z)
		\\
		=&\;   \int_\Rn dz  \int_\R d\rho \int_\Sf da(\omega) \frac{\big(u(z)-u(z+\rho\omega)\big) \big(\psi_\eps(z)-\psi_\eps(z+\rho\omega) \big) }{|\rho|^{1+2s}}
		\\=&\;  \E'( u)[\psi_\eps],
		}
		where 
		\[\psi_\eps(z)=\int_{\Rn} \psi(x) \varphi_\eps(x-z) \, dx.\]
		Notice that (for $\eps$ small) $\psi_\eps \in C^\infty_c(\Omega)$. Since $u$ is a weak solution of $\L u=0$ in $\Omega$, the claim \eqref{weak11} follows.
		Recalling that $u_\eps$ is smooth in $\Omega'$, we know that $\L u_\eps (x)$ is well defined for any $x\in \Omega'$. According to Remark \ref{weakprop} we get that pointwise in $\Omega'$ 
\bgs{\L u_\eps(x) = 0.}
 	Taking $v$ that touches $u_\eps$ at $x_0 \in \Omega'$ from below (as defined in \eqref{vvv}), i.e. $u_\eps(x_0)=v(x_0) $, with $v -u_\eps\leq 0 $ in $\Rn $ we get that
 	 	\bgs{
 	 	\L v(x_0)= \int_0^\infty d\rho \int_\Sf da(\omega)\frac{2v(x_0)-v(x_0+\rho \omega) -v(x_0-\rho\omega) }{|\rho|^{1+2s}}   \geq  \L u_\eps(x_0)=0.}
 	 	This proves that $u_\eps$ is a supersolution. In the same way, one can prove that $u_\eps$ is a subsolution, thus
 	 	\[ \L u_\eps=0\] also in the viscosity sense. It is enough now to observe that the operator $\L$ satisfies the first two conditions of \cite[Definition 3.1]{caffy} (one can prove the second item as in \cite[Proposition 2.1.4]{Silvphd}). 
 	 	Taking into account \eqref{molly}, we can use  \cite[Corollary 4.6]{caffy} to conclude that 
 	 	\[ \L u =0\] in $\Omega'$ in the viscosity sense. 
 	 	\end{proof}
 
 Also, we notice that if one takes $u\in L^{\infty}(\Rn)\cap C^\alpha(\Rn)$ for some $\alpha>0$ such that $\alpha+2s$ is not an integer, according to  \cite[Theorem 1.1, b)]{rosserr} 
we  get that $u\in C^{\alpha+2s}(\Omega')$ for any $\Omega'\subset\subset \Omega$. 
 As remarked in \cite{rosserr}, one cannot remove the hypothesis that $u\in C^\alpha(\Rn),$ in order to obtain the $C^{2s+\alpha}$ regularity of $u$. So, in this way, a weak solution of $\L u=0$ in $\Omega$ is a both viscosity  and  pointwise solution in  $\Omega'$.

\section{Asymptotics as $s\nearrow1$}\label{asymp-sec}

In this section we provide some asymptotic properties 
on the operator $\L$, the mean value defined by $\M_r^s $ and the semi-norm in \eqref{finiteenergy}. 
We study their limit behavior as $s$ approaches the upper value $1$. 
Indeed, re-normalizing (multiplying by $(1-s)$) and sending $s\nearrow1$, 
we obtain the local counterpart of the operators under study. It is interesting in our opinion, from this point of view, to understand what is the influence of the non symmetric measure $da$  in the limit, with respect to having the $d\H^{n-1}$ measure on the hypersphere.

\vskip2pt
\noindent
 We begin by showing the following. 
 
\begin{proposition}Let $u\in C^2(\Omega)\cap L^\infty(\Rn)$. Then for all $x\in \Omega$
 \bgs{ \label{aaa2}
  		\lim_{s\nearrow 1} (1-s) \L u(x) =  - \frac12
  		\sum_{i,j=1}^n \left( \int_\Sf da(\omega) \omega_i\omega_j \right)\partial_{ij}^2 u(x).
  		}
		\end{proposition}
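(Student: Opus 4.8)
The plan is to expand $\delta(u,x,\rho\omega) = 2u(x) - u(x+\rho\omega) - u(x-\rho\omega)$ by Taylor's theorem and then split the integral in $\rho$ at $\rho = 1$, handling the near-origin part by the quadratic term and the tail part by the crude $L^\infty$ bound, in each case keeping track of the factor $(1-s)$.

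First I would fix $x \in \Omega$ and choose $\delta > 0$ with $B_{2\delta}(x) \subset \Omega$. Writing the operator as $\L u(x) = \int_0^\infty d\rho \int_\Sf da(\omega)\, \delta(u,x,\rho\omega)\rho^{-1-2s}$, I would split the $\rho$-integral as $\int_0^\delta + \int_\delta^\infty$. On the tail $\rho > \delta$ we use $|\delta(u,x,\rho\omega)| \le 4\|u\|_{L^\infty(\Rn)}$ and $\int_\Sf da \le \Lambda$, giving a bound of order $\int_\delta^\infty \rho^{-1-2s}\,d\rho = \delta^{-2s}/(2s)$, which stays bounded as $s \nearrow 1$, so after multiplying by $(1-s)$ this term vanishes in the limit. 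On the piece $0 < \rho < \delta$, since $u \in C^2(\Omega)$ we have the second-order Taylor expansion $u(x \pm \rho\omega) = u(x) \pm \rho\,\nabla u(x)\cdot\omega + \frac{\rho^2}{2}\sum_{i,j}\partial_{ij}^2 u(x)\,\omega_i\omega_j\rho + o(\rho^2)$ — more precisely, the odd first-order terms cancel in $\delta$, and one gets
\[
\delta(u,x,\rho\omega) = -\rho^2 \sum_{i,j=1}^n \partial_{ij}^2 u(x)\,\omega_i\omega_j + R(x,\rho,\omega),
\]
where, using uniform continuity of $D^2 u$ on $\overline{B_\delta(x)}$, one has $|R(x,\rho,\omega)| \le \rho^2 \eta(\rho)$ with $\eta(\rho) \to 0$ as $\rho \to 0$ (uniformly in $\omega$). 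Substituting, the main term contributes
\[
-\sum_{i,j=1}^n \partial_{ij}^2 u(x) \int_\Sf da(\omega)\,\omega_i\omega_j \int_0^\delta \rho^{1-2s}\,d\rho
= -\sum_{i,j=1}^n \partial_{ij}^2 u(x) \left(\int_\Sf da(\omega)\,\omega_i\omega_j\right) \frac{\delta^{2-2s}}{2-2s},
\]
and multiplying by $(1-s)$ gives exactly $-\tfrac12 \sum_{i,j}\big(\int_\Sf da(\omega)\,\omega_i\omega_j\big)\partial_{ij}^2 u(x)\cdot\delta^{2-2s} \to -\tfrac12 \sum_{i,j}(\cdots)\partial_{ij}^2 u(x)$ as $s \nearrow 1$ (since $\delta^{2-2s} \to 1$).

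The remaining point is to show the error term is negligible after multiplying by $(1-s)$: we need $(1-s)\int_0^\delta \rho^{1-2s}\eta(\rho)\,d\rho \to 0$. This is the only slightly delicate step. I would handle it by splitting $\int_0^\delta = \int_0^{\delta'} + \int_{\delta'}^\delta$ for arbitrary small $\delta' < \delta$: on $(0,\delta')$ bound $\eta(\rho)$ by $\sup_{(0,\delta')}\eta =: \eta(\delta')$ and use $(1-s)\int_0^{\delta'}\rho^{1-2s}\,d\rho = \tfrac12 (\delta')^{2-2s} \le \tfrac12$ for $s$ close to $1$, giving a contribution $\le \tfrac12\eta(\delta')$; on $(\delta',\delta)$ the integrand is bounded and the factor $(1-s)$ kills it. Letting $s \nearrow 1$ first and then $\delta' \to 0$ shows the error vanishes. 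Combining the three pieces yields the claimed limit. The main obstacle, such as it is, is just this uniform control of the Taylor remainder against the $(1-s)$ weight — everything else is a direct computation using $\int_0^\delta \rho^{1-2s}\,d\rho = \delta^{2-2s}/(2-2s)$ and the cancellation of odd-order terms in $\delta(u,x,\cdot)$.
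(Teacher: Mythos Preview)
Your proof is correct and follows essentially the same approach as the paper: split the $\rho$-integral into a near part and a tail, bound the tail by $\|u\|_{L^\infty}$ so that it vanishes against $(1-s)$, and on the near part use a second-order Taylor expansion together with the continuity of $D^2u$ to isolate the main term $-\sum_{i,j}\partial_{ij}^2u(x)\big(\int_\Sf\omega_i\omega_j\,da\big)\,\frac{r^{2-2s}}{2-2s}$. The only cosmetic difference is that the paper uses the Lagrange form of the remainder and chooses the splitting radius $r=r(\eps)$ directly from the modulus of continuity of $D^2u$ (so the error is immediately $O(\eps)$ after multiplying by $(1-s)$), whereas you use the Peano remainder and a secondary split at $\delta'$; both arguments are equivalent.
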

\begin{proof}
We fix $x\in \Omega$. 
By a Taylor expansion, we have that for every $\rho>0$ (such that $B_\rho(x)\subset \Omega$) and every $\omega\in \Sf$, there exists $\underline h:=\underline h(\rho,\omega), \overline h :=\overline h (\rho,\omega) \in [0,\rho]$ such that	
	\bgs{
	& \delta(u,x,\rho\omega) = - \frac{\rho^2}2 \langle D^2u(x+\overline h\omega)\omega,\omega\rangle
	- \frac{\rho^2}2 \langle D^2u(x-\underline h\omega)\omega,\omega\rangle.
	}
Since $u \in C^2(\Omega)$, we have that for any $\eps>0$ there exists $r:=r(\eps)>0$ such that
\eqlab{\label{c2cond}
\Big |\big\langle 
 (D^2u(x+h\omega)- D^2u(x))\omega, \omega\big \rangle \Big|\leq \Big|D^2u(x+h\omega)- D^2u(x)\Big||\omega|^2 <\eps,
 \\
 \qquad  \mbox{ whenever } |h|=|h\omega|\leq \rho<r 
 .} 
Fixing an arbitrary $\eps$ and taking the corresponding $r:=r(\eps)$,  we write
\bgs{
 	\L u(x) = &\; \int_0^r d\rho \int_\Sf da(\omega) \frac{\delta(u,x\rho\omega)}{\rho^{1+2s}} +  \int_r^\infty d\rho \int_\Sf da(\omega) \frac{\delta(u,x\rho\omega)}{\rho^{1+2s}} 
\\
=&\; -\frac12 \int_0^r d\rho \int_\Sf da(\omega) \rho^{1-2s} 
\big\langle 
 D^2u(x+\overline h\omega)\omega, \omega\big \rangle  
 \\ &\; -\frac12 \int_0^r d\rho \int_\Sf da(\omega) \rho^{1-2s} 
\big\langle 
 D^2u(x-\underline h\omega)\omega, \omega\big \rangle\\
 &\; +  \int_r^\infty d\rho \int_\Sf da(\omega) \frac{\delta(u,x\rho\omega)}{\rho^{1+2s}} 
 \\
=&\;: \left(-\frac12\right) \left(I_{r,s}^1+ I_{r,s}^2 \right)+ J_{r,s}.
} 
Now notice that 
\bgs{ 
I_{r,s}^1
				=&\; 
		\int_0^r d\rho \int_\Sf da(\omega) {\Big\langle \big(D^2u(x+\overline h\omega)-D^2u(x)\big)\omega, \omega\Big\rangle}{\rho^{1-2s}}\\
		+&\;  
		\int_0^r d\rho \int_\Sf da(\omega) {\langle D^2u(x)\omega, \omega\rangle}{\rho^{1-2s}}
	.		}
		By using \eqref{c2cond} we notice that
	\bgs{ \left|\int_0^r d\rho \int_\Sf da(\omega) {\Big\langle \big(D^2u(x+\overline h\omega)-D^2u(x)\big)\omega, \omega\Big\rangle}{\rho^{1-2s}}\right| \leq&\;\int_0^r d\rho \int_\Sf da(\omega) 	\eps\rho^{1-2s} \\
			\leq &\; \eps\Lambda \frac{r^{2-2s}}{2(1-s)}.
	}
	On the other hand, we get that
\eqlab{ \label{firr}
	\int_0^r d\rho \int_\Sf da(\omega) {\langle D^2u(x)\omega, \omega\rangle}{\rho^{1-2s}}
	=  &\;\sum_{i,j=1}^n {\partial_{ij}^2 u}(x) \int_0^r  d\rho \: \rho^{1-2s} \int_\Sf da(\omega) \omega_i \omega_j 
	\\
	=&\; \frac{r^{2-2s}}{2(1-s)} \sum_{i,j=1}^n {\partial_{ij}^2 u}(x) \int_\Sf da(\omega) \omega_i \omega_j
	\\
	=&\, \frac{r^{2-2s}}{2(1-s)} \sum_{i,j=1}^n m_{ij}{\partial_{ij}^2 u}(x)  , 
		}
using the notation 
  \bgs{
  	m_{ij}=\int_\Sf \omega_i\omega_j da(\omega)
.}
Multiplying by $(1-s)$, letting $s\nearrow1$ we get that
\[\lim_{s\nearrow 1} (1-s) I_{r,s}^1 = \frac12 \sum_{i,j=1}^n m_{ij} \partial_{ij}^2u(x) +\mathcal O(\eps).\]
In the same way, one gets the same limit for $I_{r,s}^2$.
 		   Notice also that for $s$ close to $1$ (hence when for instance $s>1/2$)
\bgs{
	\left |J_{r,s} \right | \leq  \frac{2 r^{-2s}\|u\|_{L^{\infty}(\Rn)}\Lambda}s  . 
}
Thus we obtain
\[ \lim_{s\nearrow 1} (1-s)J_{r,s}=0.\]
Using the arbitrariness of $\eps$, it follows  that 
	\bgs{
		\lim_{s\nearrow 1} (1-s)  \L u(x) = 
		-\frac{1}{2} \sum_{i,j=1}^n m_{ij}{\partial_{ij}^2 u}(x) 
		}
		hence the conclusion.
\end{proof}

The interested reader can check also Section 3 (and the Appendix U) in \cite{gettinacq} for the asymptotics as $s\to 1$ of another (general) type of nonlocal operator. 
\begin{remark}\label{classicalLaplace}
Notice that the matrix associated to the local operator, given by the constant coefficients 
\[ m_{i,j}= \int_\Sf da(\omega) \omega_i\omega_j \quad i,j=1,\dots,n \]
is symmetric. Then that the local operator that we have obtained, i.e.
\[ \mathcal I u:=-\frac{1}{2} \sum_{i,j=1}^n m_{ij}{\partial_{ij}^2 u}(x) \] is the classical Laplacian, up to a change of coordinates, provided that the matrix is also positive definite. In fact, in order to have this, one should ask that
\eqlab{\label{el11} \inf_{\overline \omega \in \Sf} \int_\Sf |\omega \cdot \overline \omega|^2 \geq  \lambda > 0.}
In that case, indeed for any $\overline \omega\in \Sf$ we have that
\[\sum_{i,j=1}^n  \overline \omega_i\overline\omega_j \int_\Sf da(\omega) \omega_i\omega_j = \sum_{i,j=1}^n \int_\Sf da(\omega)  \omega_i\overline \omega_i \omega_j \overline\omega_j = \int_\Sf da(\omega)  |\omega\cdot  \overline \omega|^2.\] 
Notice that \eqref{el11} is true if the ellipticity assumption \eqref{elliptic}  holds, uniformly in $s$.\\
\end{remark}

	\noindent Furthermore, we have the next result.
\begin{proposition}
Let $u\in C^1(\Omega) \cap L^\infty(\Rn)$. For all $x\in \Omega$ and any $r>0$ with $B_{2r}(x)\subset \Omega$ 
	\bgs{ 
		\lim_{s\nearrow 1}  \M^s_r u(x) =  \frac12\left(\int_\Sf da\right)^{-1}   \int_\Sf da(\omega) \left( u (x-r\omega) +u(x+r\omega)\right). 
	}
\end{proposition}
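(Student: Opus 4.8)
The plan is to start from the explicit definition of $\M_r^s u(x)$ and isolate the $s$-dependent pieces, namely the constant $c(n,s,a)$ and the kernel $r^{2s}/(\rho^2-r^2)^s$. By the identity of Lemma \ref{lem1}, the quantity $\frac{2\sin\pi s}{\pi}r^{2s}\int_r^\infty (\rho^2-r^2)^{-s}\rho^{-1}\,d\rho$ equals $1$ for every $s$, so the radial kernel $K_{r,s}(\rho):=\frac{2\sin\pi s}{\pi}\,r^{2s}(\rho^2-r^2)^{-s}\rho^{-1}$ is a probability density on $(r,\infty)$. The claim then says exactly that, as $s\nearrow 1$, this family of measures concentrates at $\rho=r$, in the sense that testing against the bounded continuous function $\rho\mapsto \int_\Sf (u(x+\rho\omega)+u(x-\rho\omega))\,da(\omega)$ (continuous since $u\in C^1\subset C^0$) converges to twice its value at $\rho=r$ (the factor accounting for the $\frac12$ and the normalization $(\int_\Sf da)^{-1}$ in $c(n,s,a)$). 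So the whole proof reduces to a concentration statement for $K_{r,s}$.

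The key steps, in order. First, rewrite
\[
\M_r^s u(x) = \frac12\Big(\int_\Sf da\Big)^{-1}\int_r^\infty K_{r,s}(\rho)\,\Phi(\rho)\,d\rho,
\qquad \Phi(\rho):=\int_\Sf\big(u(x+\rho\omega)+u(x-\rho\omega)\big)\,da(\omega),
\]
and note $\Phi$ is bounded (by $2\Lambda\|u\|_{L^\infty(\Rn)}$) and continuous near $\rho=r$. Second, with the substitution $t=(\rho^2-r^2)/r^2$ (as in Lemma \ref{lem1}) one gets $\int_r^\infty K_{r,s}(\rho)f(\rho)\,d\rho=\frac{\sin\pi s}{\pi}\int_0^\infty t^{-s}(t+1)^{-1}f\big(r\sqrt{1+t}\,\big)\,dt$; since $\frac{\sin\pi s}{\pi}\int_0^\infty t^{-s}(1+t)^{-1}\,dt=1$, it suffices to show this average concentrates at $t=0$ as $s\nearrow1$. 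Third, split the $t$-integral at a small fixed $\tau>0$: on $(0,\tau)$ use continuity of $\Phi$ at $r$ to replace $\Phi(r\sqrt{1+t})$ by $\Phi(r)$ up to an error controlled by the modulus of continuity, while $\frac{\sin\pi s}{\pi}\int_0^\tau t^{-s}(1+t)^{-1}\,dt\to 1$ (because the mass on $(\tau,\infty)$, namely $\frac{\sin\pi s}{\pi}\int_\tau^\infty t^{-s}(1+t)^{-1}\,dt\le \frac{\sin\pi s}{\pi}\cdot\frac{\tau^{1-s}}{1-s}$, tends to $0$ as $s\nearrow1$ since $\sin\pi s\sim \pi(1-s)$); on $(\tau,\infty)$ bound $\Phi$ by its sup and use that the tail mass vanishes. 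Fourth, let $s\nearrow1$, then $\tau\to0$, to conclude $\int_r^\infty K_{r,s}\Phi\to\Phi(r)$, which is the assertion.

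The main obstacle — really the only nontrivial analytic point — is controlling the tail mass $\frac{\sin\pi s}{\pi}\int_\tau^\infty t^{-s}(1+t)^{-1}\,dt$ uniformly and showing it vanishes as $s\nearrow1$; this hinges on the elementary asymptotic $\sin\pi s=\sin\pi(1-s)\le\pi(1-s)$ together with the crude bound $\int_\tau^\infty t^{-s}(1+t)^{-1}\,dt\le\int_\tau^\infty t^{-1-s}\,dt=\tau^{-s}/s$, which is bounded in $s$ near $1$ for fixed $\tau$. Everything else is bookkeeping: the continuity of $\Phi$ at $\rho=r$ (where $C^1$, or even mere continuity of $u$, suffices) and the normalization constants. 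One should be mildly careful that the hypothesis $B_{2r}(x)\subset\Omega$ is used only to guarantee $\Phi$ is well-defined and smooth in a neighborhood of $\rho=r$; away from that neighborhood the $L^\infty(\Rn)$ bound on $u$ is what is needed, which is why the full statement assumes $u\in L^\infty(\Rn)$ globally.
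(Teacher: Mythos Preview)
Your argument is correct and takes a genuinely different route from the paper. Both proofs split the integral into a piece near the singularity $\rho=r$ and a tail, and both kill the tail using $\sin\pi s\to 0$. The difference is in the near-singularity piece: the paper changes variables to $\rho\in(1,1+\eps)$ and \emph{integrates by parts}, obtaining a boundary term $\frac{\eps^{1-s}}{(1-s)(\eps+2)^s(1+\eps)}\,u(x\pm r(1+\eps)\omega)$ plus an error controlled by $\|u\|_{C^1(B_{2r}(x))}$; this is precisely where the paper spends the $C^1$ hypothesis. You instead recognize $K_{r,s}$ as an approximate identity (total mass one by Lemma~\ref{lem1}) and simply replace $\Phi(r\sqrt{1+t})$ by $\Phi(r)$ on $(0,\tau)$ up to a modulus-of-continuity error, using that the mass on $(0,\tau)$ tends to $1$. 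Your route is more elementary and, as you note, actually proves the statement under the weaker hypothesis $u\in C(\Omega)\cap L^\infty(\Rn)$.

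One small slip to fix: the parenthetical bound $\frac{\sin\pi s}{\pi}\int_\tau^\infty t^{-s}(1+t)^{-1}\,dt\le \frac{\sin\pi s}{\pi}\cdot\frac{\tau^{1-s}}{1-s}$ is not right, since $t^{-s}$ is not integrable on $(\tau,\infty)$. The correct estimate is the one you give in your last paragraph, $\int_\tau^\infty t^{-s}(1+t)^{-1}\,dt\le\int_\tau^\infty t^{-1-s}\,dt=\tau^{-s}/s$, and then $\frac{\sin\pi s}{\pi}\cdot\frac{\tau^{-s}}{s}\to 0$ as $s\nearrow1$; this is all you need.
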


\begin{proof}
 We fix $\eps\in(0,1)$, which we will take arbitrarily small in the sequel.  We have that
 	\eqlab{ \label{opium}
 		\frac{\M_r^s u(x) }{ c(n,s,a)}= &\, r^{2s}\int_r^\infty d\rho\int_\Sf da(\omega) \frac{u(x+\rho\omega)+ u(x-\rho\omega) }{(\rho^2-r^2)^s\rho}
 		\\ 
 		=&\,	\int_1^\infty d\rho\int_\Sf da(\omega) \frac{u(x+r\rho\omega)+ u(x-r\rho\omega)}{(\rho^2-1)^s\rho}  
			 \\
			 =& \, \int_{1+\eps}^\infty d\rho\int_\Sf da(\omega) \frac{u(x+r\rho\omega)+ u(x-r\rho\omega)}{(\rho^2-1)^s\rho} \\
			 &\,+\int_1^{1+\eps} d\rho\int_\Sf da(\omega)  \frac{u(x+r\rho\omega)+ u(x-r\rho\omega)}{(\rho^2-1)^s\rho}
			  \\
			 =:&\, \mathcal I_1+ \mathcal I_2.
  	}
Now
	\bgs{ 
		\left| \mathcal I_1 \right| 			
			\leq &\, \int_{1+\eps}^2d\rho \int_\Sf da(\omega)\frac{|u(x+r\rho \omega)+u(x-r\rho\omega)|}{(\rho^2-1)^s\rho}
			\\
			&\; +
			\int_2^{{\infty}}d\rho \int_\Sf da(\omega)\frac{|u(x+r\rho \omega)+u(x-r\rho\omega)|}{(\rho^2-1)^s\rho}
			 \\
			\leq &\, 2 \|u\|_{C(B_{2r}(x))} \int_\Sf da 
			\int_{1+\eps}^2 \frac{d\rho}{(\rho-1)^s(\rho+1)^s\rho}
			\\
			&\,						+ 2 \|u\|_{L^\infty(\Rn)}\int_\Sf da \int_2^\infty\frac{d\rho}{(\rho^2-1)^s\rho}  
						\\
						\leq &\,    \frac{2\Lambda \|u\|_{C(B_{2r}(x))}}{(1+\eps)(2+\eps)^s} \int_{1+\eps}^2 \frac{d\rho}{(\rho-1)^s} 
						+ 4 \Lambda \|u\|_{L^\infty(\Rn)} \int_2^\infty \frac{d\rho}{ \rho^{1+2s}}
						\\
						\leq &\, \frac{2\Lambda \|u\|_{C(B_{2r}(x))} (1-\eps^{1-s})}{ 1-s} 
						+\frac{4 \Lambda \|u\|_{L^\infty(\Rn)}}s
						.	
			}	
Notice that since
	\[
	 \lim_{s\nearrow1} \frac{c(n,s,a)}{1-s}
	 = \left(\int_\Sf da\right)^{-1}
	 \] 
	 we obtain
	\[
	\lim_{s\to 1} c(n,s,a) \mathcal I_1 =0 .
	\]
On the other hand,
		integrating by parts, we get that
		\bgs{ 
			\int_1^{1+\eps} d\rho  \frac{u(x-r\rho\omega)}{(\rho^2-1)^s \rho}
				= \frac{\eps^{1-s}u\left(x-r(1+\eps)\omega\right)}{(1-s) (\eps+2)^s (1+\eps)}
					-\frac{1}{1-s} \int_1^{1+\eps} d\rho (\rho-1)^{1-s} \frac{d}{d\rho} \frac{u(x-r\rho\omega)}{(\rho+1)^s\rho }
					 .}
		We have that
		\[
			\left| \frac{d}{d\rho} \frac{u(x-r\rho \omega)}{(\rho+1)^s \rho}    \right| \leq c(r) \|u\|_{C^1{(B_{2r}(x))}}, 
		\]
		hence
		\bgs{ 
		\left|\int_1^{1+\eps} \, d\rho  \frac{u(x-r\rho\omega )}{(\rho^2-1)^s\rho} 
				- \frac{\eps^{1-s} u\left(x-r(1+\eps)\omega\right)}{(1-s)(\eps+2)^s (1+\eps)}\right|
				\leq 				\frac{\eps^{2-s} }{1-s} c(r) \|u\|_{C^1{(B_{2r}(x))}}.
	}
	In the same way, we get that
	\bgs{ 
		\left|\int_1^{1+\eps} \, d\rho  \frac{u(x+r\rho\omega )}{(\rho^2-1)^s\rho} 
				- \frac{\eps^{1-s} u\left(x+r(1+\eps)\omega\right)}{(1-s)(\eps+2)^s (1+\eps)}\right|
				\leq 				\frac{\eps^{2-s} }{1-s} c(r) \|u\|_{C^1{(B_{2r}(x))}}.
	}
	It follows that  
		\bgs{
		 \bigg| \mathcal I_2 -&\,   
		 \frac{\eps^{1-s}}{(1-s)(\eps+2)^s (1+\eps)}
	  \int_\Sf da(\omega) \, \big( u\left(x-r(1+\eps)\omega\right) +u\left(x+r(1+\eps)\omega\right)\big)  \bigg|
	\\
	\leq &\, \frac{\eps^{2-s}}{1-s} c(r) \Lambda \|u\|_{C^1(B_{2r}(x))} .
	}
	Multiplying by $c(n,s,a)$ and sending $s\nearrow1$ we get that
	\bgs{
		\lim_{s\nearrow 1} c(n,s,a) \mathcal I_2 = \frac{\left(\int_\Sf da\right)^{-1}  }{ (\eps +2)(\eps+1) } \int_\Sf  da(\omega)\, \big( u(x-r(1+\eps)\omega)+u(x+r(1+\eps)\omega) \big) \, + \mathcal O(\eps).
	}
	For $\eps \to 0$ we get that
	\bgs{
		\lim_{s\nearrow 1} c(n,s,a) \mathcal I_2 = \frac12\left(\int_\Sf da\right)^{-1}   \int_\Sf da(\omega) \, \big(u(x-r\omega) +u(x+r\omega)\big).
		}
			So putting together the limits involving $\mathcal I_1, \mathcal I_2$ into \eqref{opium} we obtain
			 the conclusion.
\end{proof}

\vskip3pt
\noindent 	
We use now the norms introduced at the beginning of Section \ref{weak}. We have the next inequality.
\begin{proposition}\label{ineqq}
Let $u\in H^1_a(\Rn)$. Then there exists $C>0$ independent of $s\in (1/2,1)$ with
\[ 
(1-s)[u]^2_{H_a^s(\Rn)} \leq C  \|u\|^2_{H^1_a(\Rn)}.
\]
\end{proposition}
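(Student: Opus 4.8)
The plan is to prove this Bourgain--Brezis--Mironescu type bound by splitting the $\rho$-integral at $|\rho|=1$: the near-diagonal part $\{|\rho|<1\}$ will be controlled by the $H^1_a$ semi-norm via the fundamental theorem of calculus, and the far part $\{|\rho|\ge1\}$ by a crude $L^2$ bound. The renormalizing factor $(1-s)$ is designed to absorb exactly the $\tfrac{1}{1-s}$ singularity produced by the near-diagonal integral, while the restriction $s>1/2$ is what makes the far part's contribution $s$-uniform.

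First I would reduce to $u\in C^\infty_c(\Rn)$. Since $\|v\|=\big(\int_\Sf(v\cdot\omega)^2\,da(\omega)\big)^{1/2}$ is a norm on $\R^n$, hence equivalent to the Euclidean norm, the space $H^1_a(\Rn)$ coincides with $H^1(\Rn)$ up to equivalent norms, so $C^\infty_c(\Rn)$ is dense in it. Granting the inequality for smooth compactly supported functions, the general case follows by choosing $u_k\in C^\infty_c(\Rn)$ with $u_k\to u$ in $H^1(\Rn)$: the right-hand side passes to the limit, and Fatou's lemma applied to the triple integral along an a.e.-convergent subsequence of the difference quotients $u_k(x)-u_k(x+\rho\omega)$ gives $(1-s)[u]^2_{H^s_a(\Rn)}\le\liminf_k(1-s)[u_k]^2_{H^s_a(\Rn)}$, whence the claim.

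For $u\in C^\infty_c(\Rn)$ I would argue as follows. On $\{|\rho|\ge1\}$, translation invariance of Lebesgue measure gives $\int_\Rn(u(x)-u(x+\rho\omega))^2\,dx\le 4\|u\|_{L^2(\Rn)}^2$; combined with $\int_{|\rho|\ge1}|\rho|^{-1-2s}\,d\rho=1/s$ and $\int_\Sf da\le\Lambda$, this part is at most $\tfrac{4\Lambda}{s}\|u\|_{L^2(\Rn)}^2$, and since $s\in(1/2,1)$ the factor $(1-s)/s$ is $\le1$, so after multiplying by $(1-s)$ it is $\le 4\Lambda\|u\|_{L^2(\Rn)}^2$. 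On $\{|\rho|<1\}$ I would write $u(x+\rho\omega)-u(x)=\rho\int_0^1\nabla u(x+t\rho\omega)\cdot\omega\,dt$, apply Cauchy--Schwarz (Jensen) in the variable $t$, integrate in $x$ and use translation invariance once more to obtain $\int_\Rn(u(x+\rho\omega)-u(x))^2\,dx\le\rho^2\int_\Rn(\nabla u(x)\cdot\omega)^2\,dx$; since $\int_{|\rho|<1}|\rho|^{1-2s}\,d\rho=\tfrac{1}{1-s}$, this part is $\le\tfrac{1}{1-s}[u]^2_{H^1_a(\Rn)}$, so after multiplying by $(1-s)$ it is $\le[u]^2_{H^1_a(\Rn)}$. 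Adding the two estimates and using $[u]^2_{H^1_a(\Rn)}\le\|u\|^2_{H^1_a(\Rn)}$ and $\|u\|^2_{L^2(\Rn)}\le\|u\|^2_{H^1_a(\Rn)}$ gives the inequality with, say, $C=1+4\Lambda$.

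The argument is essentially soft, and I do not anticipate a serious obstacle. The step requiring the most care is the approximation argument: one must verify that $[\,\cdot\,]_{H^s_a(\Rn)}$ is lower semicontinuous along an $H^1$-convergent sequence, which is exactly where the identification $H^1_a(\Rn)=H^1(\Rn)$ and Fatou's lemma enter; one should also check explicitly that the constant is independent of $s$ on $(1/2,1)$, i.e. that $(1-s)/s\le1$ there, which is the reason the statement restricts to $s>1/2$.
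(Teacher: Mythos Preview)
Your proof is correct and follows essentially the same route as the paper: split the $\rho$-integral at $|\rho|=1$, control the near part via the fundamental theorem of calculus and Cauchy--Schwarz to land on $[u]^2_{H^1_a(\Rn)}/(1-s)$, and the far part via the $L^2$ bound $\int_\Rn(u(x)-u(x+\rho\omega))^2\,dx\le 4\|u\|_{L^2}^2$ together with $\int_{|\rho|\ge1}|\rho|^{-1-2s}\,d\rho=1/s$. The only substantive difference is that the paper applies the FTC step directly to $u\in H^1_a(\Rn)$, whereas you first reduce to $u\in C^\infty_c(\Rn)$ via the identification $H^1_a(\Rn)=H^1(\Rn)$ and then recover the general case with Fatou; this extra step is not needed but is arguably cleaner.
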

\begin{proof}
We have that
\bgs{ 
	\int_\Rn dx \int_\Sf da(\omega) \left(u(x)-u(x+\rho\omega) \right)^2 \leq \rho^2 [u]^2_{H^1_a(\Rn)}.
	}
	Indeed, for all $\rho\in\R$, we have
	\bgs{ 
		\int_\Rn dx \int_\Sf da(\omega)\left(u(x)-u(x+\rho\omega) \right)^2 \leq&\;\rho^2 \int_\Rn dx \int_\Sf da(\omega) \left(\int_0^1 \nabla u(x+t\rho\omega)\cdot \omega\,  dt\right)^2
		\\
		\leq&\; \rho^2 \int_\Rn dx \int_\Sf da(\omega) \int_0^1 dt\left( \nabla u(x+t\rho\omega)\cdot \omega\right)^2
		\\
		\leq &\;\rho^2 \int_0^1 dt \int_\Rn dx \int_\Sf da(\omega) \left( \nabla u(x+t\rho\omega)\cdot \omega\right)^2
		\\ = &\; \rho^2  [u]^2_{H^1_a(\Rn)}.
		} 
		Therefore
		\bgs{
		(1-s)\|u\|_{H^s_a(\Rn)}^2\leq&\;2(1-s)[u]^2_{H_a^1(\Rn)} \int_0^1 \rho^{1-2s} \, d\rho 
		\\
		&\;+ (1-s)\int_1^\infty d\rho \rho^{-1-2s} \int_\Rn dx \int_\Sf da(\omega) (u(x)-u(x+\rho\omega))^2 
		\\
		&\;+ (1-s)\int_1^\infty d\rho \rho^{-1-2s} \int_\Rn dx \int_\Sf da(\omega) (u(x)-u(x-\rho\omega))^2  
		\\
		\leq &\; C( [u]^2_{H_a^1(\Rn)} +  \|u\|^2_{L^2(\Rn)}),
		}
	for some positive constant $C$.
\end{proof}
In what follows, we prove a Bourgain-Brezis-Mironescu type property \cite{BBM} for anisotropic norms.
A different type of anisotropicity in the formula was recently investigated in \cite{Anis} and in \cite{Ludwig}.

\begin{proposition}[BBM type formula]
	\label{bbm-th}
Let $u\in H_{a,0}^1(\Rn)$. Then, we have the formula
\eqlab{ \label{bbm}
	\lim_{s\to 1} (1-s)[u]_{H^s_a(\Rn)}^2
	=
	 [u]^2_{H^1_a(\Rn)}.
	}
\end{proposition}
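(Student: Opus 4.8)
The plan is to prove \eqref{bbm} first for $u\in C^\infty_c(\Rn)$ and then to obtain the general case by density, using Proposition \ref{ineqq} to bound the $H^s_a$--energy of the approximation error uniformly in $s$. For the smooth case I would use the Fourier transform. Let $\widehat u$ denote the (unitary) Fourier transform of $u\in C^\infty_c(\Rn)$; since the integrand defining $[u]^2_{H^s_a(\Rn)}$ is non-negative, Fubini applies, and Plancherel in the $x$ variable gives, for each $\rho\in\R$ and $\omega\in\Sf$,
\[
\int_\Rn\big(u(x)-u(x+\rho\omega)\big)^2\,dx=\int_\Rn|\widehat u(\xi)|^2\big(2-2\cos(\rho\,\xi\cdot\omega)\big)\,d\xi .
\]
Integrating in $\rho$ and using the scaling identity $\int_\R\frac{2-2\cos(\rho t)}{|\rho|^{1+2s}}\,d\rho=C_s\,|t|^{2s}$, with
\[
C_s:=\int_\R\frac{2-2\cos\tau}{|\tau|^{1+2s}}\,d\tau=\frac{2\pi}{\Gamma(1+2s)\,\sin(\pi s)}=\frac{2\,\Gamma(s)\Gamma(1-s)}{\Gamma(1+2s)},
\]
a second application of Fubini yields $[u]^2_{H^s_a(\Rn)}=C_s\int_\Rn|\widehat u(\xi)|^2\big(\int_\Sf|\xi\cdot\omega|^{2s}\,da(\omega)\big)\,d\xi$.

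From the explicit value of $C_s$ (which can be obtained, as in Lemma \ref{lem1}, from the Beta function, or from the one-dimensional fractional Laplacian constant) one gets $\lim_{s\nearrow1}(1-s)C_s=1$, since $\Gamma(1+2s)\to2$ and $\sin(\pi s)\sim\pi(1-s)$. As $|\xi\cdot\omega|^{2s}\le1+|\xi|^2$ for $s\in(1/2,1)$, one has $\int_\Sf|\xi\cdot\omega|^{2s}\,da(\omega)\le\Lambda(1+|\xi|^2)$, and $(1-s)C_s$ is bounded near $s=1$; hence the family $(1-s)C_s|\widehat u(\xi)|^2\int_\Sf|\xi\cdot\omega|^{2s}\,da(\omega)$ is dominated by an $s$-independent $L^1(\Rn)$ function (recall $\widehat u$ is Schwartz). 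Dominated convergence then gives
\[
\lim_{s\nearrow1}(1-s)[u]^2_{H^s_a(\Rn)}=\int_\Rn|\widehat u(\xi)|^2\Big(\int_\Sf|\xi\cdot\omega|^2\,da(\omega)\Big)\,d\xi=\sum_{i,j=1}^n m_{ij}\int_\Rn\xi_i\xi_j|\widehat u(\xi)|^2\,d\xi,
\]
with $m_{ij}=\int_\Sf\omega_i\omega_j\,da(\omega)$; a last use of Plancherel ($\int_\Rn\xi_i\xi_j|\widehat u|^2=\int_\Rn\partial_iu\,\partial_ju$) and the identity $\sum_{i,j}m_{ij}\partial_iu\,\partial_ju=\int_\Sf(\nabla u\cdot\omega)^2\,da(\omega)$ rewrite the right-hand side as $[u]^2_{H^1_a(\Rn)}$, proving \eqref{bbm} for $u\in C^\infty_c(\Rn)$.

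For general $u\in H^1_{a,0}(\Rn)$, put $N_s(v):=\sqrt{1-s}\,[v]_{H^s_a(\Rn)}$, a seminorm for each fixed $s$, and choose $u_k\in C^\infty_c(\Rn)$ with $u_k\to u$ in $\|\cdot\|_{H^1_a(\Rn)}$. By Proposition \ref{ineqq}, $N_s(u-u_k)\le\sqrt C\,\|u-u_k\|_{H^1_a(\Rn)}$ uniformly in $s\in(1/2,1)$, while $[u-u_k]_{H^1_a(\Rn)}\le\|u-u_k\|_{H^1_a(\Rn)}$; combining the triangle inequalities for $N_s$ and for $[\cdot]_{H^1_a(\Rn)}$ with the smooth case ($N_s(u_k)\to[u_k]_{H^1_a(\Rn)}$) gives $\limsup_{s\nearrow1}\big|N_s(u)-[u]_{H^1_a(\Rn)}\big|\le(\sqrt C+1)\,\|u-u_k\|_{H^1_a(\Rn)}$, and letting $k\to\infty$ yields \eqref{bbm}. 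Alternatively, one could stay closer to Bourgain--Brezis--Mironescu and, for smooth $u$, slice $\Rn=\R\omega\oplus\omega^\perp$ to reduce each $\omega$-contribution to the classical one-dimensional limit, then integrate in $x'$ and in $da(\omega)$. In either route the genuinely technical steps --- and the main obstacle --- are the exact evaluation of $C_s$ together with the limit $(1-s)C_s\to1$, and, for the passage to non-smooth $u$, the uniform-in-$s$ estimate of Proposition \ref{ineqq} that makes the density argument work; everything else is routine bookkeeping.
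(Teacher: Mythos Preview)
Your proof is correct but follows a genuinely different route from the paper. The paper argues in real variables: for $u\in C^1_c(\Rn)$ it writes $u(x+\rho\omega)-u(x)=\rho\,\omega\cdot\nabla u(x+\overline h\omega)$ via the mean value theorem, splits the $\rho$-integral at a small $r$ chosen so that $\nabla u(x+\overline h\omega)$ is uniformly $\eps$-close to $\nabla u(x)$, and shows that the near part contributes $\frac{r^{2-2s}}{2(1-s)}[u]^2_{H^1_a(\Rn)}+\mathcal O(\eps)/(1-s)$ while the far part is bounded independently of $s$; multiplying by $(1-s)$ and letting $s\nearrow1$, then $\eps\to0$, gives \eqref{bbm}. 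Your Fourier argument is slicker on $\Rn$: Plancherel turns the problem into the computation of the one-dimensional constant $C_s$ and a dominated-convergence step, and the exact value of $C_s$ immediately identifies the limiting constant as $1$. The trade-off is that the paper's approach is more robust---it does not rely on translation invariance or Plancherel and would adapt more easily to domains or to situations where the Fourier transform is unavailable---while yours is shorter and makes the dependence on $s$ completely explicit. The density step, passing from smooth compactly supported $u$ to $H^1_{a,0}(\Rn)$ via Proposition \ref{ineqq} and the triangle inequality for the seminorms, is essentially identical in both proofs.
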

\begin{proof} 
We prove \eqref{bbm} first for any $u\in C^1_c(\Rn)$.
We write
	\eqlab{\label{jjj}
		&\int_{\R^n}dx\int_\R d\rho\int_\Sf da(\omega) \frac{\big(u(x)-u(x+\rho\omega)\big)^2}{\rho^{1+2s}} 
		\\
		=&\,\int_{\R^n}dx\int_0^\infty d\rho\int_\Sf da(\omega) \frac{\big(u(x)-u(x+\rho\omega)\big)^2}{\rho^{1+2s}} 
		\\
		&\;+\int_{\R^n}dx\int_0^\infty d\rho\int_\Sf da(\omega) \frac{\big(u(x)-u(x-\rho\omega)\big)^2}{\rho^{1+2s}} .
	}
Since $u\in C^1$ by the mean value theorem, there exist $\underline h:=\underline h(\rho,\omega), \overline h:=\overline h(\rho,\omega)\in[0,\rho]$ such that
\bgs{
	u(x+\rho\omega)-u(x)=& \;\rho\omega\cdot\nabla u(x+\overline h\omega) \quad \mbox{ and }
	\\
	u(x-\rho\omega)-u(x)=& \;-\rho\omega\cdot\nabla u(x+\underline h\omega).
}
Furthermore, for any $\eps>0$ there exists $r:=r(\eps)>0$ such that 	
	\eqlab{\label{hhhh}
		\left|  \nabla u(x+h\omega)-\nabla u(x) \right| <\eps 
		\qquad \mbox{ whenever } |h|=|h\omega|<\rho<r.
	}
We fix $\eps>0$ (to be taken arbitrarily small in the sequel) and consider the correspondent $r:=r(\eps)$. We then write
\bgs{
\int_{\R^n}dx&\int_0^\infty d\rho\int_\Sf da(\omega) \frac{\big(u(x)-u(x+\rho\omega)\big)^2}{\rho^{1+2s}} 
		\\
		=&\;
				\int_{\R^n}dx\int_0^r d\rho\int_\Sf da(\omega) \rho^{1-2s} \big(\nabla u(x+\overline h\omega) \cdot \omega\big)^2
	\\
	&\;+
		\int_{\R^n}dx\int_r^\infty d\rho\int_\Sf da(\omega)  \frac{\big(u(x)-u(x+\rho\omega)\big)^2}{\rho^{1+2s}} 
		\\
		=:&\; I_{r,s}^1+I_{r,s}^2.
		}	
		Notice that
		\bgs{
			I_{r,s}^1 = &\;	\int_{\R^n} dx\int_0^r d\rho \rho^{1-2s} \int_\Sf da(\omega)    \big(\nabla u(x+\overline h\omega) \cdot \omega\big)^2 -  \big(\nabla u(x) \cdot \omega\big)^2  
			\\
	+&\;
	 	\int_{\R^n}dx\int_0^r d\rho \rho^{1-2s} \int_\Sf da(\omega)  \big(\nabla u(x) \cdot \omega\big)^2 
	 	\\
	 	=&\; J_{r,s}^1 +J_{r,s}^2.
	}
	From \eqref{hhhh}, we have that 
	\bgs{
	 \Big |&\big(\nabla u(x+\overline h\omega) \cdot \omega\big)^2 -  \big(\nabla u(x) \cdot \omega\big)^2   \Big| 
	 \\
	 \leq &\; \Big| \left(\nabla u(x+\overline h \omega) -\nabla u(x) \right)\cdot \omega \Big| \, \Big|\left(\nabla u(x+\overline h \omega) -\nabla u(x) \right) \cdot \omega \Big|
	\leq 2\eps \|u\|_{C^1(\R^n)}.
	}
	Therefore, for some compact set $K\subset\R^n$ independent of $\eps,$ there holds
	\bgs{ \left|J_{r,s}^1 \right| \leq \eps \Lambda \|u\|_{C^1(\R^n)} \frac{r^{2-2s}}{2(1-s)}|K|.
	}
Also	
	\bgs{
		J_{r,s}^2 = \frac{r^{2-2s}}{2(1-s)} \int_{\R^n}dx\int_\Sf da(\omega) (\nabla u(x)\cdot \omega)^2.
	}
	It follows that
	\[ \lim_{s\to 1} (1-s) I_{r,s}^1 =\frac12\int_{\R^n}dx\int_\Sf  da(\omega)(\nabla u(x)\cdot \omega)^2 +\mathcal O(\eps).\]
			Furthermore we get that
		\bgs{\left| I_{r,s}^2\right|
		\leq
		&\; 2\|u\|_{L^2(\Rn)}^2\Lambda \int_r^\infty \rho^{-1-2s}\, d\rho			
		=			\frac{ r^{2s}\|u\|_{L^2(\Rn)}^2 \Lambda }s,
		}
		hence
		\[ \lim_{s\to 1} (1-s) I_{r,s}^2= 0.\]
		We finally get that
		\bgs{
		 \lim_{s\to 1} (1-s) \int_{\R^n}dx\int_0^\infty d\rho\int_\Sf da(\omega) \frac{\big(u(x)-u(x+\rho\omega)\big)^2}{\rho^{1+2s}}  = \frac12\int_\Sf  da(\omega)(\nabla u(x)\cdot \omega)^2 +\mathcal O(\eps).
		}
		We obtain the same limit for the second term in \eqref{jjj} and get \eqref{bbm} for $u\in C_c^1(\Rn)$ by sending $ \eps\to 0$.
	Let now $u\in H_{a,0}^1(\Rn)$. There exists $\{u_j\}_j \in C_c^\infty(\Rn)$ such that
	\[ \|u-u_j\|_{H^1_a(\Rn)} \to 0 \quad \mbox{ as } j\to \infty.\]
	Then, according to Proposition \ref{ineqq} we have that
	\[ (1-s)\left( [u]_{H^s_a(\Rn)}-[u_j]_{H^s_a(\Rn)}  \right)^2 \leq (1-s)[u-u_j]^2_{H^s_a(\Rn)} \leq C \|u-u_j\|^2_{H^1_a(\Rn) } \to 0 \quad \mbox { as } j\to 0.  \]
	The conclusion \eqref{bbm} for $u\in H^1_{a,0}(\Rn)$ immediately follows.
\end{proof}

\begin{remark}
	If $da=d{\mathscr H}^{n-1}$, the left-hand side of the formula in 
	Proposition \ref{bbm} boils down to
	$$
	\lim_{s\to 1} (1-s)\int_{\R^n} dx\int_0^\infty d\rho\int_\Sf d{\mathscr H}^{n-1}(\omega) \frac{\big(u(x)-u(x+\rho\omega)\big)^2}{|\rho|^{1+2s}}, 
	$$
	while the right-hand side to
	$$
		\frac{1}{2} \int_{\R^n}dx \int_\Sf d{\mathscr H}^{n-1}(\omega)(\omega\cdot\nabla u(x))^2=\frac{Q_{n,2}}{2}\int_\Omega |\nabla u(x)|^2 dx
	$$
	where
	$$
	Q_{n,2}=\int_\Sf |\sigma\cdot \omega|^2d{\mathscr H}^{n-1}(\omega)
	$$
	for some $\sigma\in \Sf$. This is consistent with the usual Brezis-Bourgain-Mironescu 
	formula (see \cite{BBM}).
	\end{remark}

\bigskip

\bibliography{biblio}

\begin{thebibliography}{10}

\bibitem{AbatLarge}
Nicola Abatangelo.
\newblock Large {$S$}-harmonic functions and boundary blow-up solutions for the
  fractional {L}aplacian.
\newblock {\em Discrete Contin. Dyn. Syst.}, 35(12):5555--5607, 2015.

\bibitem{gettinacq}
Nicola Abatangelo and Enrico Valdinoci.
\newblock Getting acquainted with the fractional laplacian.
\newblock {\em arXiv preprint arXiv:1710.11567}, 2017.

\bibitem{abramowitz}
Milton Abramowitz and Irene~A. Stegun, editors.
\newblock {\em Handbook of mathematical functions with formulas, graphs, and
  mathematical tables}.
\newblock A Wiley-Interscience Publication. John Wiley \& Sons, Inc., New York;
  National Bureau of Standards, Washington, DC, 1984.
\newblock Reprint of the 1972 edition, Selected Government Publications.

\bibitem{AL}
\'Angel Arroyo and Jos\'e~G. Llorente.
\newblock On the asymptotic mean value property for planar {$p$}-harmonic
  functions.
\newblock {\em Proc. Amer. Math. Soc.}, 144(9):3859--3868, 2016.

\bibitem{BassChen}
Richard~F. Bass and Zhen-Qing Chen.
\newblock Regularity of harmonic functions for a class of singular stable-like
  processes.
\newblock {\em Math. Z.}, 266(3):489--503, 2010.

\bibitem{basskass}
Richard~F. Bass and Moritz Kassmann.
\newblock H\"older continuity of harmonic functions with respect to operators
  of variable order.
\newblock {\em Comm. Partial Differential Equations}, 30(7-9):1249--1259, 2005.

\bibitem{bogdan1}
Krzysztof Bogdan and {Pawe\l} Sztonyk.
\newblock Estimates of the potential kernel and {H}arnack's inequality for the
  anisotropic fractional {L}aplacian.
\newblock {\em Studia Math.}, 181(2):101--123, 2007.

\bibitem{BBM}
Jean Bourgain, Haim Brezis, and Petru Mironescu.
\newblock Another look at {S}obolev spaces.
\newblock In {\em Optimal control and partial differential equations}, pages
  439--455. IOS, Amsterdam, 2001.

\bibitem{bucur}
Claudia Bucur.
\newblock Some observations on the {G}reen function for the ball in the
  fractional {L}aplace framework.
\newblock {\em Communications on Pure and Applied Analysis}, 15(2):657--699,
  2016.

\bibitem{nonlocal}
Claudia Bucur and Enrico Valdinoci.
\newblock Nonlocal diffusion and applications.
\newblock {\em arXiv preprint arXiv:1504.08292}, 2015.
\newblock Accepted for Publication for the Springer Series ``Lecture Notes of
  the Unione Matematica Italiana''.

\bibitem{caffy}
Luis Caffarelli and Luis Silvestre.
\newblock Regularity theory for fully nonlinear integro-differential equations.
\newblock {\em Communications on Pure and Applied Mathematics}, 62(5):597--638,
  2009.

\bibitem{hitch}
Eleonora Di~Nezza, Giampiero Palatucci, and Enrico Valdinoci.
\newblock Hitchhiker's guide to the fractional {S}obolev spaces.
\newblock {\em Bull. Sci. Math.}, 136(5):521--573, 2012.

\bibitem{FerrariMean}
Fausto Ferrari.
\newblock Mean value properties of fractional second order operators.
\newblock {\em Commun. Pure Appl. Anal.}, 14(1):83--106, 2015.

\bibitem{garofalo}
Nicola Garofalo.
\newblock Fractional thoughts.
\newblock {\em arXiv preprint arXiv:1712.03347}, 2017.

\bibitem{kassmann}
Moritz Kassmann, Marcus Rang, and Russell~W Schwab.
\newblock Integro-differential equations with nonlinear directional dependence.
\newblock {\em Indiana Univ. Math. J}, 63(5):1467--1498, 2014.

\bibitem{MPR2}
Bernd Kawohl, Juan Manfredi, and Mikko Parviainen.
\newblock Solutions of nonlinear {PDE}s in the sense of averages.
\newblock {\em J. Math. Pures Appl. (9)}, 97(2):173--188, 2012.

\bibitem{Landkof}
Naum~S. Landkof.
\newblock {\em Foundations of modern potential theory}.
\newblock Springer-Verlag, New York-Heidelberg, 1972.
\newblock Translated from the Russian by A. P. Doohovskoy, Die Grundlehren der
  mathematischen Wissenschaften, Band 180.

\bibitem{LM}
Peter Lindqvist and Juan Manfredi.
\newblock On the mean value property for the {$p$}-{L}aplace equation in the
  plane.
\newblock {\em Proc. Amer. Math. Soc.}, 144(1):143--149, 2016.

\bibitem{Ludwig}
Monica Ludwig.
\newblock Anisotropic fractional sobolev norms.
\newblock {\em Adv. Math.}, 252:150--157, 2014.

\bibitem{MPR}
Juan~J. Manfredi, Mikko Parviainen, and Julio~D. Rossi.
\newblock An asymptotic mean value characterization for {$p$}-harmonic
  functions.
\newblock {\em Proc. Amer. Math. Soc.}, 138(3):881--889, 2010.

\bibitem{Anis}
Hoai-Minh Nguyen and Marco Squassina.
\newblock On anisotropic sobolev spaces.
\newblock {\em Preprint}, 2017.

\bibitem{rosserr}
Xavier Ros-Oton and Joaquim Serra.
\newblock Regularity theory for general stable operators.
\newblock {\em J. Differential Equations}, 260(12):8675--8715, 2016.

\bibitem{rosval}
Xavier Ros-Oton and Enrico Valdinoci.
\newblock The {D}irichlet problem for nonlocal operators with singular kernels:
  convex and nonconvex domains.
\newblock {\em Advances in Mathematics}, 288:732--790, 2016.

\bibitem{Valdy}
Raffaella Servadei and Enrico Valdinoci.
\newblock Weak and viscosity solutions of the fractional {L}aplace equation.
\newblock {\em Publ. Mat.}, 58(1):133--154, 2014.

\bibitem{Silvphd}
Luis Silvestre.
\newblock Regularity of the obstacle problem for a fractional power of the
  {L}aplace operator.
\newblock {\em Comm. Pure Appl. Math.}, 60(1):67--112, 2007.

\bibitem{sztonyk}
{Pawe\l} Sztonyk.
\newblock Regularity of harmonic functions for anisotropic fractional
  {L}aplacians.
\newblock {\em Math. Nachr.}, 283(2):289--311, 2010.

\end{thebibliography}
\bibliographystyle{plain}

\end{document}